\newtheorem{prop}{Proposition}[section]
\newaliascnt{lemma}{prop} 
\newtheorem{lemma}[lemma]{Lemma}
\newaliascnt{mydef}{prop} 
\newaliascnt{corol}{prop} 
\newaliascnt{remark}{prop} 
 \newtheorem{remark}[remark]{Remark}
\newaliascnt{mythm}{prop} 
 \newtheorem{mythm}[mythm]{Theorem}
\newaliascnt{example}{prop} 
\def\equationautorefname~#1\null{%
  (#1)\null
}
\newcommand{\R}{\ensuremath{\mathbb{R}}}
\newcommand{\N}{\ensuremath{\mathbb{N}}}
\newcommand{\Z}{\ensuremath{\mathbb{Z}}}
\newcommand*\diff{\mathop{}\!\mathrm{d}}
\newcommand{\cL}{\mathcal{L}}
\newcommand{\cA}{\mathcal{A}}
\title{The Fractional Laplacian has Infinite Dimension}
\author{Adrian Spener\thanks{Institute of Applied Analysis, University of Ulm, Helmholtzstra\ss e 18, 89081 Ulm, Germany. 
\texttt{adrian.spener@uni-ulm.de}}, Frederic Weber\thanks{Institute of Applied Analysis, University of Ulm, Helmholtzstra\ss e 18, 89081 Ulm, Germany. \texttt{frederic.weber@uni-ulm.de}},
 and Rico Zacher\thanks{Institute of Applied Analysis, University of Ulm, Helmholtzstra\ss e 18, 89081 Ulm, Germany. 
\texttt{rico.zacher@uni-ulm.de}}}
\begin{document}

\maketitle

\begin{abstract}We show that the fractional Laplacian on $\R^d$ fails to satisfy the Bakry-\'Emery curvature-dimension inequality $CD(\kappa,N)$
for all curvature bounds $\kappa\in \R$ and all finite dimensions $N>0$. 
\end{abstract}


\bigskip
\noindent \textbf{Keywords:} Gamma Calculus, Bakry-\'Emery Curvature-Dimension Condition, Fractional Laplacian.
 
 \noindent \textbf{MSC(2010)}: 35R11 (primary), 47D07, 26D10, 60G22 (secondary).




\section{Introduction}

The fractional Laplacian is an important example of a non-local operator and has received considerable
attention in recent years. It appears in many fields of analysis and probability theory, for example in the theory of stochastic processes as generator of a class of pure jump processes,
so-called $\alpha$-stable L\'evy processes \cite{MR2512800,MR1918242,MR2465826}. It can be further identified with a certain 
Dirichlet-to-Neumann operator, which is the key property of the so-called harmonic extension method as described by
Caffarelli and Silvestre in \cite{MR2354493}. During the last decade, non-local PDEs involving a fractional Laplacian or other related
non-local operators have been studied intensively, see e.g. \cite{MR2244602,MR2270163,MR2354493,MR2448308,MR2498561,MR2465826,MR2494809,MR2680400,MR2847534,MR3169755,MR3122168,MR3165278,MR3339179,MR3280032,2016arXiv160807571I}.

In the present paper we study the fractional Laplacian in $\R^d$ from a geometric analyst's point of view. More precisely,
we are interested in the question whether the fractional Laplacian satisfies a curvature-dimension (CD) inequality with {\em finite}
dimension. Curvature-dimension inequalities play a central role in the theory of Markov semigroups and operators and
related functional inequalities like e.g. the Poincar\'e or logarithmic Sobolev inequality (cf. \cite{MR3155209}). They also constitute an important
tool in geometric analysis as they encode information of the geometric properties (curvature and dimension) of the underlying
structure, e.g. a Riemannian manifold \cite{MR2962229}.\\
There exist several different notions of CD-inequalities. In this paper, we use the classical one, which was introduced by 
Bakry and \'Emery and is formulated in terms of the carr\'e du champ operator $\Gamma$ and its iterated operator $\Gamma_2$
associated with the infinitesimal generator $\cL$ of a Markov semigroup, see \cite{MR889476}. 
Another widely
used approach relies on the theory of optimal transport, see \cite{MR2459454}. 
For Riemannian manifolds, both approaches lead to the same notion of lower bounds for the Ricci curvature.  

To describe the notion of CD-inequality in the Bakry-\'Emery sense, let $\cL$ be the generator of a symmetric Markov semigroup with invariant reversible measure $\mu$ on the state space $E$. The bilinear operators $\Gamma$ and $\Gamma_2$ are defined by
\begin{align*}
\Gamma(u,v)& = \frac{1}{2}\left( \cL(uv) - u \cL v - v \cL u\right),\\
\Gamma_2(u,v)& = \frac{1}{2} \left( \cL \Gamma (u,v) - \Gamma (u, \cL v) - \Gamma (\cL u,v)\right)
\end{align*}
on a suitable algebra $\cA$ of real-valued functions $u,v$ defined on the underlying state space $E$. 
Furthermore, one sets
\begin{align}
 \Gamma(u) & := \Gamma(u,u) = \frac{1}{2} \cL  (u^2) - u \cL u,\label{eq:DefGamma}\\
 \Gamma_2(u)& := \Gamma_2(u,u) = \frac{1}{2} \cL \Gamma (u) - \Gamma (u, \cL u).\label{eq:DefGamma2}
\end{align}
 
Let $\kappa\in \R$ and $N \in (0, \infty]$. The operator $\cL$ is said to satisfy the \emph{Bakry-\'Emery curvature-dimension inequality} $CD(\kappa, N)$ with dimension $N$ and curvature $\kappa$ (lower bound) at $x\in E$ if 
\begin{equation} \label{eq:BE}
 \Gamma_2(u)(x) \geq \frac{1}{N} \big((\cL u)(x)\big)^2 + \kappa \Gamma(u)(x)
\end{equation}
for all functions $u:\,E\to \R$ in a sufficiently rich class $\cA$ of functions. We further say that $\cL$ satisfies the
$CD(\kappa,N)$-inequality, if \eqref{eq:BE} holds $\mu$-almost everywhere for all $u\in \cA$, cf.\ \cite[Sect. 1.16]{MR3155209}.\\
As an illustrating example we let $E = (M,g)$ be a Riemannian manifold with canonical Riemannian measure $\mu_g$ and $\mathcal{L} = \Delta_g$ the Laplace-Beltrami operator. Using the Bochner-Lichnerowicz formula one obtains that $CD(\kappa, N)$ is equivalent to $\operatorname{Ric}_g(x) \geq \kappa g(x)$ and $\dim M \leq N$.\\

In this paper, we take as operator $\cL$ the fractional Laplacian of order $\beta\in (0,2)$ on $E=\R^d$, which can be defined by
\begin{equation}                                                                                                                         
\label{eq:DefLaplace}
{\mathcal{L}}u(x) := -(-\Delta u)^\frac {\beta}{2}(x) = c_{\beta,d} \int_{\R^d} \frac{u(x+h) - 2u(x) +u(x-h)}{|h|^{d+\beta} } \diff h,\quad x\in \R^d,                                                                                                             \end{equation}
for functions $u$ which are sufficiently regular and do not grow too fast as $\lvert x \rvert\to \infty$, for instance functions in the Schwartz space $\mathcal{S}(\R^d)$. The normalising constant $c_{\beta, d}$ is given by 
\[
c_{\beta, d} := \frac{2^\beta \Gamma(\frac{d+\beta}{2})}{\pi^\frac{d}{2}|\Gamma(-\frac{\beta}{2})|},
\]
where $\Gamma$ now denotes the Gamma function.
We refer to \cite[Theorem 1.1. (e)]{10definitons}, which collects several other equivalent definitions of the fractional
Laplacian. Furthermore, the Lebesgue measure on $\R^d$ plays the role of the invariant reversible measure $\mu$.

Does the fractional Laplacian satisfy the $CD(\kappa,N)$-inequality with some $\kappa\in \R$ and {\em finite} $N>0$?
This question in the special case $\kappa=0$ has been recently identified as a key open problem in extending
the Gamma calculus of Bakry and \'Emery to non-local operators such as the fractional Laplacian. It has been posed
in the recent survey of Garofalo in \cite[(20.14)]{FractionalThoughts}, who also describes possible applications
such as {\em non-local Li-Yau inequalities}. We remark that the fractional Laplacian satisfies $CD(0,\infty)$ for all $\beta\in (0,2)$.
This is a trivial consequence of the representation formula for the associated $\Gamma_2$ operator, which implies that 
$\Gamma_2(u)\ge 0$ for all $u$, cf. formula \eqref{eq:Gamma2} below.
\\
Recall that in the case of the classical Laplacian $\Delta$ on $\R^d$ (which corresponds to the limiting case $\beta=2$) there holds
\[\Gamma_2(u) = \lvert\operatorname{Hess}(u)\rvert_{\operatorname{HS}}^2 \geq \frac{1}{d}( \Delta u)^2.\]
That is, the Laplacian satisfies the $CD(0,d)$-inequality. By means of this inequality and the chain rule for the Laplacian one can
show that positive solutions $u$ of the heat equation in $\R^d$ satisfy the celebrated Li-Yau gradient estimate or differential Harnack inequality (\cite{MR834612})
\[
|\nabla (\log u)(t,x)|^2-\partial_t (\log u)(t,x)\le \frac{d}{2t},\quad t>0,\,x\in \R^d.
\]
Integrating this estimate along suitable paths in space-time, one can deduce a sharp version of the parabolic Harnack
inequality. More generally, any Markov diffusion operator for which $CD(0,N)$ holds true with a finite $N > 0$ satisfies a Li-Yau inequality by \cite[Cor. 6.7.5]{MR3155209}.
\\

As the main result of this paper, we give a negative answer to Garofalo's question by proving there exists no finite $N> 0$ such that the fractional Laplacian satisfies $CD(0,N)$.
 As a corollary  to this result we obtain by means of a scaling argument that the situation is even worse, namely there exist no pair $(\kappa,N)\in \R\times (0,\infty)$ such that the fractional Laplacian satisfies the $CD(\kappa,{ {N}})$-condition. In particular, a negative curvature term does not save the CD-inequality with finite dimension. 
In this sense, the fractional Laplacian {\em has infinite dimension}. Our main result reads as follows. Note that here we 
refer to the representation formulas \eqref{eq:Gamma} and \eqref{eq:Gamma2} for $\Gamma(u,v)$ and $\Gamma_2(u)$,
respectively (see Lemma \ref{represent} below), which hold for sufficiently regular functions which do not grow too fast at infinity, in particular for $\mathcal{C}_c^\infty$-functions.

\begin{mythm} \label{thm:main}
Let $\beta\in (0,2)$, $d \in \N$ and $\mathcal{L}$ be the fractional Laplacian from \eqref{eq:DefLaplace} with associated operators $\Gamma$ and $\Gamma_2$ given below by \eqref{eq:Gamma} and \eqref{eq:Gamma2}, respectively.
Then
for any $R > 0$, any $\kappa \in \R$ and any finite $N > 0$ there exists some smooth and compactly supported function $u \in \mathcal{C}^\infty_c(\R^d)$ such that
\begin{equation}
 \label{eq:BEInequ} 
 \Gamma_2(u)(x) < \kappa \Gamma(u)(x) + \frac{1}{N} \left( \mathcal{L}(u)(x)\right)^2 
\end{equation}
for all $x \in \R^d$ with $\lvert x\rvert \leq R$. 
\end{mythm}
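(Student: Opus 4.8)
The plan is to exhibit, for given $R,\kappa,N$, an explicit family of test functions $u=u_\lambda$ depending on a parameter $\lambda$ and to show that the quantity $\Gamma_2(u_\lambda)(x) - \kappa\Gamma(u_\lambda)(x) - \tfrac1N(\mathcal{L}u_\lambda(x))^2$ can be driven negative, uniformly on the ball $\{|x|\le R\}$, by choosing $\lambda$ appropriately. The natural mechanism to exploit is the non-locality of $\mathcal{L}$: unlike the classical Laplacian, $\mathcal{L}u(x)$, $\Gamma(u)(x)$, and $\Gamma_2(u)(x)$ all depend on the values of $u$ far away from $x$. So I would take a function that is identically equal to a fixed profile $\varphi$ on a large ball containing $\{|x|\le R\}$, but which carries a small, highly oscillatory or strongly concentrated bump $\psi_\lambda$ supported far away, at distance of order $\lambda$. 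On the ball of radius $R$ the "local" contributions to $\Gamma$ and $\Gamma_2$ from $\varphi$ stay bounded, while the bump contributes a controllable perturbation through the singular integrals; the key is to engineer the sign so that $\Gamma_2$ is made small (or the $(\mathcal{L}u)^2$ term large) relative to $\Gamma$.

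Concretely, I would first write down, from the representation formulas \eqref{eq:Gamma} and \eqref{eq:Gamma2} in Lemma~\ref{represent}, the pointwise expressions for $\Gamma(u)(x)$ and $\Gamma_2(u)(x)$ as double, respectively triple, integrals against the kernels $|h|^{-d-\beta}$. Then I would choose $u = \varphi + \varepsilon\,\psi(\cdot - y_\lambda)$ where $\varphi\in\mathcal{C}^\infty_c$ is fixed with $\varphi\equiv 1$ on $\{|x|\le R+1\}$ (so that $\mathcal{L}\varphi$, $\Gamma(\varphi)$, $\Gamma_2(\varphi)$ are explicitly computable and, crucially, $\Gamma(\varphi)(x)$ is bounded below by a positive constant on $\{|x|\le R\}$ — this uses $\varphi$ nonconstant globally), $\psi$ is a fixed bump, $|y_\lambda|=\lambda\to\infty$, and $\varepsilon=\varepsilon(\lambda)$ is tuned. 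Expanding bilinearly, for $|x|\le R$ one gets $\mathcal{L}u(x) = \mathcal{L}\varphi(x) + \varepsilon\,\mathcal{L}\psi(\cdot-y_\lambda)(x)$, and the cross term $\mathcal{L}\psi(\cdot-y_\lambda)(x)$ is of order $\lambda^{-d-\beta}$ since $x$ and the support of the bump are separated by $\sim\lambda$. Similarly $\Gamma(u)(x) = \Gamma(\varphi)(x) + O(\varepsilon\lambda^{-d-\beta}) + O(\varepsilon^2\lambda^{-d-\beta})$, which stays close to the positive lower bound of $\Gamma(\varphi)$. The interesting term is $\Gamma_2(u)$: the pure-$\varphi$ part $\Gamma_2(\varphi)(x)$ is a fixed bounded function, the $\varepsilon^2$ self-interaction of the far bump is again $O(\varepsilon^2\lambda^{-d-\beta})$, but one has to be careful that the $CD(0,\infty)$ positivity (formula \eqref{eq:Gamma2}) does not prevent $\Gamma_2$ from being small; since $\Gamma_2(\varphi)\ge 0$ this does not immediately help, so instead I would arrange the profile $\varphi$ so that $\Gamma_2(\varphi)(x)$ is small compared with $\kappa^-\Gamma(\varphi)(x)$ and $\tfrac1N(\mathcal{L}\varphi(x))^2$ on the target ball — in fact, after a scaling $u\mapsto u(s\,\cdot)$, both $\Gamma$ and $\Gamma_2$ scale with different powers of $s$, which is precisely the scaling argument the introduction alludes to for passing from $\kappa=0$ to general $\kappa$.

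Therefore I would organize the proof in two steps. Step one: handle $\kappa=0$ by finding $u$ with $\Gamma_2(u)(x) < \tfrac1N(\mathcal{L}u(x))^2$ on $\{|x|\le R\}$; here the idea is to make $\mathcal{L}u(x)$ bounded away from zero while $\Gamma_2(u)(x)$ is forced to be small — and the natural way to make $\Gamma_2$ small for a non-local operator, even though it is globally $\ge 0$, is to use a function whose "second-order" structure as seen by $\Gamma_2$ nearly vanishes on the ball while $\mathcal{L}u$ does not, exploiting that $\Gamma_2$ involves a $\tfrac12\mathcal{L}\Gamma(u) - \Gamma(u,\mathcal{L}u)$ cancellation that is genuinely different from the local Hessian-squared term. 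Step two: deduce the general $(\kappa,N)$ case from the $\kappa=0$, $N'$ case by the dilation $u_s(x)=u(sx)$: one computes that $\mathcal{L}u_s = s^\beta(\mathcal{L}u)(s\,\cdot)$, $\Gamma(u_s) = s^\beta\Gamma(u)(s\cdot)$, $\Gamma_2(u_s)=s^{2\beta}\Gamma_2(u)(s\cdot)$, so the inequality \eqref{eq:BEInequ} for $u_s$ on $\{|x|\le R\}$ is equivalent to $s^\beta\Gamma_2(u)(z) < \kappa\Gamma(u)(z) + \tfrac1N(\mathcal{L}u(z))^2$ on $\{|z|\le sR\}$, and sending $s\to 0$ kills the left side relative to the Li-Yau term provided $\mathcal{L}u$ does not vanish, absorbing any sign of $\kappa$; this is exactly how "a negative curvature term does not save the inequality."

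I expect the main obstacle to be Step one: making $\Gamma_2(u)(x)$ genuinely small on a whole ball while keeping $(\mathcal{L}u(x))^2$ bounded below there. This requires a careful quantitative choice of the test function and honest estimates on the triple singular integral defining $\Gamma_2$ — in particular controlling the kernel interactions between the local behaviour near $x$ and the far-away bump, and checking that the positivity inherent in \eqref{eq:Gamma2} (which gives $CD(0,\infty)$) does not obstruct the construction. A clean way to beat it is to choose $u$ so that its restriction to a large ball agrees with an affine function (on which a local operator's $\Gamma_2$ would vanish identically), forcing $\Gamma_2(u)$ to come \emph{only} from the non-local tail and hence to be as small as $O(\lambda^{-d-\beta})$, whereas $\mathcal{L}u$ picks up a non-vanishing contribution from the curvature of $u$ further out; the contrast between "$\Gamma_2$ sees only the far tail" and "$\mathcal{L}u$ sees the near transition region" is the heart of the matter.
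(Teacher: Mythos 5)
Your Step two contains a decisive scaling error, and Step one is missing the key mechanism, so the proposal has a genuine gap. For the scaling: with $u_s(x)=u(sx)$ one has $\mathcal{L}u_s=s^{\beta}(\mathcal{L}u)(s\,\cdot)$, $\Gamma(u_s)=s^{\beta}\Gamma(u)(s\,\cdot)$ and $\Gamma_2(u_s)=s^{2\beta}\Gamma_2(u)(s\,\cdot)$, so $\Gamma_2$ and $(\mathcal{L}u)^2$ scale by the \emph{same} factor $s^{2\beta}$; dividing by $s^\beta$, the inequality for $u_s$ at $x$ reads $s^{\beta}\Gamma_2(u)(z)<\kappa\Gamma(u)(z)+\frac{s^{\beta}}{N}(\mathcal{L}u(z))^2$ with $z=sx$, not the relation you wrote (you dropped the factor $s^\beta$ on the last term). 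Hence dilation can never ``kill the left side relative to the Li--Yau term'': it only rescales the curvature term $\kappa\Gamma$ relative to the other two. This is exactly how the paper reduces $CD(\kappa,N)$ to $CD(0,N)$ in Proposition \ref{prop:NoCurvature} (dilation parameter sent to infinity, suppressing $\kappa\Gamma$) and how it passes from a small ball to a ball of radius $R$ in Theorem \ref{thm:FinalBall}; the dimensional part --- finding $u$ with $\Gamma_2(u)<\frac1N(\mathcal{L}u)^2$ for arbitrarily large $N$ --- cannot be obtained by scaling and must come from the construction itself.

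That is precisely where your Step one fails. The ansatz $u=\varphi+\varepsilon\,\psi(\cdot-y_\lambda)$ with $\varphi\equiv 1$ on $B_{R+1}(0)$ changes $\mathcal{L}u$, $\Gamma(u)$ and $\Gamma_2(u)$ on $\{|x|\le R\}$ only by small corrections of order $\lambda^{-d-\beta}$, so on the ball the ratio $\Gamma_2(u)/(\mathcal{L}u)^2$ is essentially that of the fixed profile $\varphi$, a fixed positive quantity; no remaining parameter drives it below $1/N$ for every $N$, and you offer no mechanism for ``arranging $\varphi$'' to do so. Moreover the far-bump heuristic points the wrong way: for $u=\psi(\cdot-y)$ with $|y|=\lambda$ one has $\mathcal{L}u(x)\sim\lambda^{-d-\beta}$ but also $\Gamma_2(u)(x)\sim\lambda^{-d-\beta}$ (dominant contributions with $|\sigma|\sim\lambda$ and $|h|=O(1)$, or vice versa), so $\Gamma_2(u)(x)/(\mathcal{L}u(x))^2\sim\lambda^{d+\beta}\to\infty$; likewise ``affine on a large ball'' makes both $\mathcal{L}u(x)$ and $\Gamma_2(u)(x)$ see only the far region and does not separate their sizes. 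The paper's actual mechanism is of a different nature: in $d=1$ take $u_\varepsilon(x)=|x|^{\beta-\varepsilon}$ for $|x|\ge 1$, smoothly cut off near $0$; the near-critical growth makes $\mathcal{L}u_\varepsilon(0)\ge C_0/\varepsilon$ blow up, while a careful case-by-case estimate of the double integral gives only $\Gamma_2(u_\varepsilon)(0)\le C_1/\varepsilon$, so $\Gamma_2(u_\varepsilon)(0)/(\mathcal{L}u_\varepsilon(0))^2\lesssim\varepsilon\to 0$ (Theorem \ref{thm:MainTheoremExact}). This is then transferred to $\mathcal{C}^\infty_c$ functions by a delicate truncation argument (Theorem \ref{thm:cc}), to a small ball by continuity (Proposition \ref{prop:locallyfalse}), and to an arbitrary ball and general $\kappa$ by the correct scaling (Theorem \ref{thm:FinalBall}, Proposition \ref{prop:NoCurvature}). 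Without an idea of this kind --- a one-parameter family along which $(\mathcal{L}u)^2$ genuinely outgrows $\Gamma_2$ --- your outline cannot be completed.
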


Theorem \ref{thm:main} follows from Theorem \ref{thm:FinalBall} and Proposition \ref{prop:NoCurvature}.\\

Our proof of the first part proceeds in several steps. We first consider the case $d=1$. Given a finite $N>0$, we construct an {\em unbounded} function $u$ for which $CD(0,N)$ fails at $x=0$. The key idea, which is motivated by \cite[Theorem 3.1]{allgemeinDiskret}, is to consider functions  which are smooth enough near $x=0$ and equal to $\lvert x\rvert^{\beta-\varepsilon}$ for $\lvert x\rvert\ge 1$ with small $\varepsilon>0$.
 In the very recent work \cite{allgemeinDiskret}, which deals with CD-inequalities for non-local discrete operators on the lattice $\Z$,
 the described family of functions is used to prove that $CD(0,N)$ fails for all finite $N>0$ for a certain class of operators with power type kernel,
 in particular for all powers $-(-\Delta_{disc})^{\frac{\beta}{2}}$ of the discrete Laplacian $\Delta_{disc}$ on $\Z$ with $\beta\in (0,2)$. 
 
 By considering functions which depend only on one real variable, the one-dimensional counterexample can be extended to
 the multi-dimensional case. To get a counterexample with compactly supported functions, we carry out an approximation argument which involves carefully chosen cut-off functions.
 The argument is very technical, since the estimates for the $\Gamma_2$ operator require distinguishing  several
 cases with respect to the domain of integration.  Finally, to see the failure of $CD(0,N)$ on an arbitrary ball around zero,
 we use a continuity argument to first prove the result for a sufficiently small ball and then use the scaling properties
 of the fractional Laplacian and its associated $\Gamma_2$ operator to get the desired result on any ball.

\subsubsection*{Acknowledgement} Adrian Spener and Rico Zacher are supported by the DFG (project number 355354916, GZ ZA 547/4-1).

\section{Gamma Calculus and the Fractional Laplacian}

Let $u$ be smooth enough and growing slowly enough such that the following integrals exist, for instance let $u$ be in the Schwartz space $\mathcal{S}(\R^d)$. Then the quadratic operators from the $\Gamma$-calculus are given as follows.
\begin{lemma} \label{represent}
Let $0 < \beta < 2$ and $\mathcal L$ be the fractional Laplacian from \eqref{eq:DefLaplace} with associated $\Gamma$ from \eqref{eq:DefGamma} and $\Gamma_2$ from \eqref{eq:DefGamma2}.
Then
\begin{equation} \label{eq:Gamma}
 \Gamma(u,v)(x) = {c_{\beta,d}}\int_{\R^d}\frac{(u(x+h)-u(x))(v(x+h)-v(x))}{|h|^{d+\beta}}\diff h 
\end{equation}
and 
 \begin{equation}
  \label{eq:Gamma2}
  \Gamma_2(u)(x) =  {c_{\beta,d}^2}\int_{\R^d}\int_{\R^d}\frac{[u(x+h+\sigma)-u(x+h)-u(x+\sigma)+u(x)]^2}{|h|^{d+\beta}|\sigma|^{d+\beta}}\diff h\diff \sigma.
 \end{equation}
\end{lemma}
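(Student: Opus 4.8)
The approach is to compute both quantities directly: one substitutes the integral representation \eqref{eq:DefLaplace} of $\cL$ into the definitions \eqref{eq:DefGamma}, \eqref{eq:DefGamma2}, simplifies by elementary algebra, and exploits the evenness of the kernels to symmetrise in the jump variables. The only genuine analytic ingredient is to justify that every integral involved converges absolutely, so that integrals may be merged, reordered by Fubini, and symmetrised freely. I would first record the elementary bounds underlying this. For $u\in\mathcal{S}(\R^d)$ (more generally, $u\in C^2$ with bounded first and second derivatives) the symmetric second difference $D_hu(x):=u(x+h)-2u(x)+u(x-h)$ obeys $|D_hu(x)|\le|h|^2\|\nabla^2u\|_\infty$ and $|D_hu(x)|\le4\|u\|_\infty$, so the integrand in \eqref{eq:DefLaplace} is $O(|h|^{2-d-\beta})$ as $h\to0$ and $O(|h|^{-d-\beta})$ as $|h|\to\infty$, hence integrable since $\beta<2$. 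Writing
\[
\Delta_{h,\sigma}u(x):=u(x+h+\sigma)-u(x+h)-u(x+\sigma)+u(x),
\]
one has $|\Delta_{h,\sigma}u(x)|\le 2\min\{|h|,|\sigma|\}\,\|\nabla u\|_\infty$, $|\Delta_{h,\sigma}u(x)|\le|h|\,|\sigma|\,\|\nabla^2u\|_\infty$ and $|\Delta_{h,\sigma}u(x)|\le 4\|u\|_\infty$; splitting $\R^d\times\R^d$ according to whether $|h|,|\sigma|$ are $\le1$ or $>1$, these give the absolute (and, for $|x|\le R$, locally uniform) convergence of the double integral on the right of \eqref{eq:Gamma2} --- for $u\in\mathcal{C}^\infty_c(\R^d)$ even more directly, since $\Delta_{h,\sigma}u(x)$ vanishes when $h,\sigma$ are both large. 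Finally, these estimates show that $\cL u$ is bounded and Lipschitz and that $\Gamma(u)$ is bounded and $C^2$ with bounded Hessian (and, for $u\in\mathcal{C}^\infty_c$, decays like $|x|^{-d-\beta}$), so that $\cL(uv)$, $\cL u$, $\cL v$, $\cL\Gamma(u)$ and $\Gamma(u,\cL u)$ are all well defined.

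For \eqref{eq:Gamma}, insert \eqref{eq:DefLaplace} into $2\Gamma(u,v)=\cL(uv)-u\,\cL v-v\,\cL u$ and merge the three absolutely convergent integrals. With the shorthand $u_\pm:=u(x\pm h)$, $u_0:=u(x)$, and likewise for $v$, the pointwise identity
\begin{align*}
&\big[u_+v_+-2u_0v_0+u_-v_-\big]-u_0\big[v_+-2v_0+v_-\big]-v_0\big[u_+-2u_0+u_-\big]\\
&\qquad=(u_+-u_0)(v_+-v_0)+(u_--u_0)(v_--v_0)
\end{align*}
turns $2\Gamma(u,v)(x)$ into $c_{\beta,d}\int_{\R^d}\big[(u_+-u_0)(v_+-v_0)+(u_--u_0)(v_--v_0)\big]|h|^{-d-\beta}\diff h$, whose two summands contribute equally since $|h|^{-d-\beta}$ is even; this gives \eqref{eq:Gamma}. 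In particular $\Gamma(u)(x)=c_{\beta,d}\int_{\R^d}(u(x+h)-u(x))^2|h|^{-d-\beta}\diff h$.

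For \eqref{eq:Gamma2} I would start from $\Gamma_2(u)=\tfrac12\cL\Gamma(u)-\Gamma(u,\cL u)$ and expand both summands as double integrals in two jump variables $h,\sigma$ via \eqref{eq:DefLaplace} and \eqref{eq:Gamma}. The computation rests on the elementary identities
\[
\big(u(x+\sigma+h)-u(x+\sigma)\big)-\big(u(x+h)-u(x)\big)=\Delta_{h,\sigma}u(x),\qquad D_hu(x+\sigma)-D_hu(x)=\Delta_{h,\sigma}u(x)+\Delta_{-h,\sigma}u(x),
\]
from the first of which, with $\delta_hu(y):=u(y+h)-u(y)$, one gets $\big(\delta_hu(x+\sigma)\big)^2-\big(\delta_hu(x)\big)^2=2\,\delta_hu(x)\,\Delta_{h,\sigma}u(x)+\big(\Delta_{h,\sigma}u(x)\big)^2$. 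Using the evenness of the kernels in $h$ and in $\sigma$ to symmetrise, one obtains
\begin{align*}
\tfrac12\,\cL\Gamma(u)(x)&=c_{\beta,d}^2\iint\frac{2\,\delta_hu(x)\,\Delta_{h,\sigma}u(x)+\big(\Delta_{h,\sigma}u(x)\big)^2}{|h|^{d+\beta}|\sigma|^{d+\beta}}\diff h\diff\sigma,\\
\Gamma(u,\cL u)(x)&=2\,c_{\beta,d}^2\iint\frac{\delta_\sigma u(x)\,\Delta_{h,\sigma}u(x)}{|h|^{d+\beta}|\sigma|^{d+\beta}}\diff h\diff\sigma.
\end{align*}
Subtracting, \eqref{eq:Gamma2} follows once one observes that $\Delta_{h,\sigma}u(x)$ and the measure $|h|^{-d-\beta}|\sigma|^{-d-\beta}\diff h\diff\sigma$ are both symmetric under the interchange $h\leftrightarrow\sigma$, so that the cross term $\iint\big(\delta_hu(x)-\delta_\sigma u(x)\big)\Delta_{h,\sigma}u(x)\,|h|^{-d-\beta}|\sigma|^{-d-\beta}\diff h\diff\sigma$ vanishes. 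The main obstacle is not the algebra but the analytic bookkeeping: establishing absolute convergence of all the iterated integrals --- in particular checking that $\Gamma(u)$ is regular enough for $\cL\Gamma(u)$ to be defined --- and verifying that each Fubini interchange and each symmetrisation step is licit.
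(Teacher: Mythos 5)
Your route is exactly the calculation the paper has in mind: \eqref{eq:Gamma} by merging the three second-difference integrals (the paper simply cites \cite[Lemma 20.2]{FractionalThoughts} for this), and \eqref{eq:Gamma2} via the identity $a^2-b^2-2b(a-b)=(a-b)^2$ with $b=\delta_hu(x):=u(x+h)-u(x)$ and $a=\delta_hu(x+\sigma)$, which is precisely the paper's hint; the algebra, and the whole treatment of \eqref{eq:Gamma}, are correct. The genuine gap is your analytic claim that \emph{every} integral involved converges absolutely: this fails for $\beta\in[1,2)$. Writing $\Delta_{h,\sigma}u(x):=u(x+h+\sigma)-u(x+h)-u(x+\sigma)+u(x)$, for fixed $\sigma$ one has $\Delta_{h,\sigma}u(x)=\big(\nabla u(x+\sigma)-\nabla u(x)\big)\cdot h+O(|h|^2)$ as $h\to0$, so $\Delta_{h,\sigma}u(x)$ is genuinely of order $|h|$; hence $|\delta_\sigma u(x)\,\Delta_{h,\sigma}u(x)|\,|h|^{-d-\beta}|\sigma|^{-d-\beta}$ behaves like $|h|^{1-d-\beta}$ near $h=0$ and is not integrable once $\beta\geq1$ (symmetrically, the $\delta_hu\,\Delta_{h,\sigma}u$ term diverges near $\sigma=0$, and the same happens for the combined cross term $(\delta_hu-\delta_\sigma u)\Delta_{h,\sigma}u$). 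Consequently your two displayed identities for $\tfrac12\cL\Gamma(u)(x)$ and $\Gamma(u,\cL u)(x)$ equate well-defined quantities with double integrals that are only conditionally convergent in that range, and the two steps that rest on them --- converting second differences into twice a first difference via $\sigma\to-\sigma$ (resp.\ $h\to-h$), and killing the cross term by antisymmetry under $h\leftrightarrow\sigma$ --- are not licit as stated. For $\beta<1$ your argument is complete.

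The repair is standard and does not change the idea. Either keep symmetric second differences throughout: the quantities that actually occur, namely the $\sigma$-second difference of $(\delta_hu(\cdot))^2$ and the product $\delta_\sigma u(x)\big(D_hu(x+\sigma)-D_hu(x)\big)$ with $D_hu(x)=u(x+h)-2u(x)+u(x-h)$, are $O\big(\min(|h|^2,1)\min(|\sigma|^2,1)\big)$, so those double integrals do converge absolutely. Or, more simply, carry out the algebra and all substitutions on the truncated region $\{|h|>\epsilon\}\times\{|\sigma|>\epsilon\}$, which is preserved by $h\to-h$, $\sigma\to-\sigma$ and $h\leftrightarrow\sigma$ and on which every split piece is absolutely convergent (bounded numerators, kernels integrable away from the origin); there the cross term cancels exactly and one obtains the truncated version of \eqref{eq:Gamma2}. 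Letting $\epsilon\to0$, the truncated $\tfrac12\cL\Gamma(u)(x)-\Gamma(u,\cL u)(x)$ converges to the true one by dominated convergence applied to the second-difference forms, while the right-hand side converges to \eqref{eq:Gamma2} because $(\Delta_{h,\sigma}u)^2$ is dominated exactly by the bounds you listed. With this adjustment your proof is a correct, detailed version of the paper's ``short calculation''.
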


\begin{proof}
 The first equality is given for instance in \cite[Lemma 20.2]{FractionalThoughts}, the second one can be obtained from a short calculation, where one uses $a^2 - b^2 - 2b(a-b) = (a-b)^2$ for
 $a = u(x+h+\sigma)-u(x+\sigma)$, $b = u(x+h)-u(x)$.
\end{proof}

This immediately implies that for all $0<\beta<2$ the fractional Laplacian satisfies $CD(0,\infty)$ at all $x \in \R^d$.

\begin{lemma}\label{lemma:WLOG}
For fixed $x \in \R^d$ in \eqref{eq:BEInequ} we may assume without loss of generality that $x=0$ and $u(0) = 0$.
\end{lemma}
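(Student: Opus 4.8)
The plan is to exploit two elementary invariance properties of $\mathcal{L}$, $\Gamma$ and $\Gamma_2$ that are manifest from the representation formulas \eqref{eq:DefLaplace}, \eqref{eq:Gamma} and \eqref{eq:Gamma2}: invariance under translations and invariance under adding a constant to the argument. For the translation step I fix $x_0\in\R^d$ and, given an admissible function $u$, set $v(\cdot):=u(x_0+\cdot)$. Renaming the integration variables in \eqref{eq:DefLaplace}, \eqref{eq:Gamma} and \eqref{eq:Gamma2} immediately yields $\mathcal{L}(v)(0)=\mathcal{L}(u)(x_0)$, $\Gamma(v)(0)=\Gamma(u)(x_0)$ and $\Gamma_2(v)(0)=\Gamma_2(u)(x_0)$; in other words all three operators commute with the translation $u\mapsto u(x_0+\cdot)$, which moreover maps $\mathcal{C}^\infty_c(\R^d)$ (and the class of smooth, slowly growing functions covered by Lemma \ref{represent}) onto itself. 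Hence \eqref{eq:BEInequ} holds at $x_0$ for $u$ precisely when it holds at $0$ for $v$, and we may assume $x=0$.

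For the second normalisation I observe that the bracketed expressions in \eqref{eq:DefLaplace}, \eqref{eq:Gamma} and \eqref{eq:Gamma2} are first- or second-order differences of $u$, hence invariant under $u\mapsto u-c$ for any constant $c\in\R$; thus $\mathcal{L}(u-c)=\mathcal{L}(u)$, $\Gamma(u-c)=\Gamma(u)$ and $\Gamma_2(u-c)=\Gamma_2(u)$ pointwise. Applying this with $c=u(0)$ and passing to $\tilde u:=u-u(0)$ gives $\tilde u(0)=0$ while leaving both sides of \eqref{eq:BEInequ} at $x=0$ unchanged, completing the reduction.

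The one point needing a word of care — and essentially the only subtlety — is that subtracting the constant $u(0)$ destroys compact support. This causes no harm: in the situation where this reduction is invoked the competitor is permitted to be unbounded, so $\tilde u$ still belongs to the admissible class of Lemma \ref{represent}; and in the later stage where a genuinely compactly supported counterexample is required, one simply arranges $u(0)=0$ at the outset. All the changes of variable above are justified since, by Lemma \ref{represent}, the integrals defining $\mathcal{L}u$, $\Gamma(u)$ and $\Gamma_2(u)$ converge absolutely for the functions under consideration.
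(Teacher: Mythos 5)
Your argument is correct and is essentially the paper's own: the paper performs both normalisations in one stroke by passing from $v$ to $u(y):=v(x-y)-v(x)$, which is exactly your translation (up to an immaterial reflection, harmless since the kernels are even) combined with subtraction of the constant $v(x)$, and it relies on the same invariances of $\mathcal{L}$, $\Gamma$ and $\Gamma_2$ under translations and under adding constants. Your extra remark about constant subtraction destroying compact support is a fair observation but not needed, since the lemma is only invoked for the class of smooth, slowly growing functions of Lemma \ref{represent}.
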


\begin{proof}
 Replacing an arbitrary function $v$ with $u(y) := v({ {x-y}}) - v(x)$ we obtain a function $u$ satisfying the assertions of \autoref{lemma:WLOG} as well as $\mathcal{L}u(0) = \mathcal{L}v(x)$, $\Gamma u(0) = \Gamma v(x)$ and $\Gamma_2 u(0) = \Gamma_2v(x)$.
\end{proof}

Using a scaling argument, we may assume without loss of generality that $\kappa = 0$ in \eqref{eq:BEInequ}, i.e. to show that the fractional Laplacian has infinite dimension we may assume that $\kappa = 0$. This is the content of the next proposition.

\begin{prop} \label{prop:NoCurvature}
$CD(\kappa, N)$ with some finite $ N > 0$ and some $\kappa \in \R$ implies $CD(0,N)$.
\end{prop}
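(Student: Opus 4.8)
The plan is to exploit the scaling behaviour of the fractional Laplacian under dilations $u \mapsto u_\lambda$, where $u_\lambda(x) := u(\lambda x)$ for $\lambda > 0$. From the integral representation \eqref{eq:DefLaplace}, a change of variables $h \mapsto \lambda h$ shows $\mathcal{L}(u_\lambda)(x) = \lambda^\beta (\mathcal{L}u)(\lambda x)$. Feeding this into \eqref{eq:Gamma} and \eqref{eq:Gamma2} (again substituting $h \mapsto \lambda h$ and $\sigma \mapsto \lambda \sigma$ in the double integral) gives the matching homogeneities $\Gamma(u_\lambda)(x) = \lambda^\beta (\Gamma u)(\lambda x)$ and $\Gamma_2(u_\lambda)(x) = \lambda^{2\beta} (\Gamma_2 u)(\lambda x)$. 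So $\Gamma_2$ scales like the square of $\mathcal{L}$ but $\Gamma$ only picks up one power of $\lambda^\beta$: the dimension term and the curvature term scale differently.

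Now suppose $CD(\kappa,N)$ holds with finite $N$ and some $\kappa \in \R$; I want to deduce $CD(0,N)$. Fix $u \in \mathcal{A}$ (say $u \in \mathcal{C}_c^\infty(\R^d)$, which suffices by the reduction remarks preceding the proposition) and a point $x$. Apply the assumed inequality \eqref{eq:BE} to the rescaled function $u_\lambda$ at the point $x/\lambda$:
\begin{equation*}
\Gamma_2(u_\lambda)(x/\lambda) \ge \frac{1}{N}\big(\mathcal{L}(u_\lambda)(x/\lambda)\big)^2 + \kappa\, \Gamma(u_\lambda)(x/\lambda).
\end{equation*}
Using the three scaling identities this becomes
\begin{equation*}
\lambda^{2\beta}\, \Gamma_2(u)(x) \ge \frac{1}{N}\,\lambda^{2\beta}\big(\mathcal{L}u(x)\big)^2 + \kappa\, \lambda^{\beta}\, \Gamma(u)(x),
\end{equation*}
and dividing by $\lambda^{2\beta} > 0$ gives
\begin{equation*}
\Gamma_2(u)(x) \ge \frac{1}{N}\big(\mathcal{L}u(x)\big)^2 + \kappa\, \lambda^{-\beta}\, \Gamma(u)(x).
\end{equation*}
Since $\Gamma(u)(x) \ge 0$ always, letting $\lambda \to \infty$ when $\kappa \le 0$, or $\lambda \to 0^+$ when $\kappa \ge 0$ (in either case driving $\kappa \lambda^{-\beta}\Gamma(u)(x) \to 0$), yields $\Gamma_2(u)(x) \ge \frac{1}{N}(\mathcal{L}u(x))^2$, i.e. $CD(0,N)$ at $x$. (If $\kappa = 0$ there is nothing to prove.)

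There is no serious obstacle here; the only points needing a line of care are: verifying the scaling identities by the stated changes of variables (routine, but one should note the normalising constant $c_{\beta,d}$ is inert under scaling so it causes no trouble); checking that the admissible class $\mathcal{A}$ — or at least $\mathcal{C}_c^\infty(\R^d)$, which is all that is used in Theorem \ref{thm:main} — is closed under dilation, which it plainly is; and making sure the sign of $\Gamma(u)(x)$ is exploited in the correct direction, which is why the limit is taken in $\lambda$ rather than trying a single clever value. If one wants the statement for almost-every-$x$ rather than pointwise, the same argument applies verbatim since the map $x \mapsto x/\lambda$ preserves Lebesgue-null sets.
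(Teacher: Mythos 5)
Your proof is correct and uses essentially the same argument as the paper: the dilation $u\mapsto u(\lambda\,\cdot)$ scales $\Gamma_2$ and $(\mathcal{L}u)^2$ by $\lambda^{2\beta}$ but $\Gamma$ only by $\lambda^{\beta}$, so the curvature term can be scaled away (the paper first reduces to $x=0$ via Lemma \ref{lemma:WLOG} and rescales there, whereas you apply the hypothesis to $u_\lambda$ at $x/\lambda$, which amounts to the same computation). One parenthetical slip: for $\kappa>0$, letting $\lambda\to 0^+$ sends $\kappa\lambda^{-\beta}\Gamma(u)(x)$ to $+\infty$ rather than $0$; this is harmless because that case is trivial anyway (the term is nonnegative and can be dropped, or one may take $\lambda\to\infty$ uniformly in $\kappa$).
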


\begin{proof} By means of Lemma \ref{lemma:WLOG} we may assume that $x = 0$ and $u(0) = 0$.  The assertion is evident
in the case $\kappa > 0$. Now assume that $CD(-\kappa, N)$ holds for some $\kappa > 0$ and finite $N > 0$. Then we have
 \begin{equation}
  \label{eq:Gamma1wird0}
\frac{1}{N}(\mathcal{L}(u)(0))^2 \leq  \Gamma_2(u){ {(0)}} +\kappa \Gamma(u){ {(0)}}
 \end{equation}
 for all $u$ smooth and growing slowly enough. For $\lambda > 0$ we let $v(x) = u(\lambda x)$, and calculate
 \[
  \mathcal{L}(v)(0) = c_{\beta,d} \lambda^{d+\beta} \int_{\R^d} \frac{u(\lambda h) + u(-\lambda h)}{|\lambda h|^{d+\beta}} \diff h
  = c_{\beta,d} \lambda^{\beta} \int_{\R^d} \frac{u(x) + u(-x)}{\lvert x \rvert^{d+\beta}} \diff x
  = \lambda^\beta \mathcal{L}(u)(0).
 \]
Similarly we find
\[
 \Gamma(v)(0) = {c_{\beta,d}\lambda^{d+\beta}} \int_{\R^d} \frac{u(\lambda h)^2}{|\lambda h|^{d+\beta}}\diff h = \lambda^\beta \Gamma(u)(0)
\]
and
\[
\Gamma_2(v)(0) = {c_{\beta,d}^{ 2}\lambda^{2d+2\beta}} \int_{\R^d}\int_{\R^d}\frac{[u(\lambda h+ \lambda \sigma)-u(\lambda h)-u(\lambda\sigma)]^2}{|\lambda h|^{d+\beta}|\lambda \sigma|^{d+\beta}}\diff h\diff \sigma =
\lambda^{2\beta}\Gamma_2(u)(0).
\]
Whence, from \eqref{eq:Gamma1wird0} we find $
\frac{1}{N}(\mathcal{L}(v)(0))^2 \leq \Gamma_2(v)(0) + \lambda^\beta \kappa \Gamma(v)(0)$
for all $\lambda > 0$, which implies the assertion.
\end{proof}

Let us give a small lemma which will simplify the calculation in the third section.

\begin{lemma}\label{lem:simplified}
Let $d=1$ and $u$ be an even function with sufficient smoothness and growing slowly enough which satisfies $u(0) = 0$, then
\begin{equation}
 \label{eq:Gamma_2_simplified}
  \Gamma_2(u)(0) = 4c_{\beta,1}^2 \int_0^\infty \int_0^h \frac{[u(h+\sigma)- u(h) - u(\sigma)]^2+[u(h-\sigma)- u(h) - u(\sigma)]^2}{h^{{ {1}}+\beta} \sigma^{{ {1}}+\beta}} \diff \sigma \diff h.
\end{equation}
\end{lemma}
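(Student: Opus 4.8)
The goal is to simplify the double integral \eqref{eq:Gamma2} for $\Gamma_2(u)(0)$ when $d=1$, $u$ is even, and $u(0)=0$. Starting from
\[
\Gamma_2(u)(0) = c_{\beta,1}^2 \int_{\R}\int_{\R}\frac{[u(h+\sigma)-u(h)-u(\sigma)+u(0)]^2}{|h|^{1+\beta}|\sigma|^{1+\beta}}\diff h\diff\sigma,
\]
I would first drop the $u(0)$ term since $u(0)=0$. Call the numerator-without-square $F(h,\sigma) := u(h+\sigma)-u(h)-u(\sigma)$. The plan is to split $\R\times\R$ into the four quadrants according to the signs of $h$ and $\sigma$, and then, within each quadrant, exploit the evenness of $u$ together with the symmetry of the kernel $|h|^{-1-\beta}|\sigma|^{-1-\beta}$ under $h\mapsto -h$, $\sigma\mapsto-\sigma$.

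The key observations to carry out are: (i) the integrand is symmetric under swapping $h\leftrightarrow\sigma$, so the two "mixed-sign" quadrants $\{h>0,\sigma<0\}$ and $\{h<0,\sigma>0\}$ contribute equally, as do the two "same-sign" quadrants; (ii) since $u$ is even, $F(-h,-\sigma) = u(-h-\sigma)-u(-h)-u(-\sigma) = u(h+\sigma)-u(h)-u(\sigma) = F(h,\sigma)$, which lets me fold the quadrant $\{h<0,\sigma<0\}$ onto $\{h>0,\sigma>0\}$ and the quadrant $\{h<0,\sigma>0\}$ onto $\{h>0,\sigma<0\}$. This reduces the integral to twice the sum of the contribution over $\{h>0,\sigma>0\}$ and the contribution over $\{h>0,\sigma<0\}$. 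In the mixed-sign piece I would substitute $\sigma\mapsto-\sigma$ to get $\int_0^\infty\int_0^\infty [u(h-\sigma)-u(h)-u(\sigma)]^2 h^{-1-\beta}\sigma^{-1-\beta}\diff\sigma\diff h$, using evenness to rewrite $u(-\sigma)=u(\sigma)$. Thus
\[
\Gamma_2(u)(0) = 2c_{\beta,1}^2\int_0^\infty\int_0^\infty \frac{[u(h+\sigma)-u(h)-u(\sigma)]^2 + [u(h-\sigma)-u(h)-u(\sigma)]^2}{h^{1+\beta}\sigma^{1+\beta}}\diff\sigma\diff h.
\]
Finally, the integrand of this last expression is symmetric under $h\leftrightarrow\sigma$: the first square is manifestly symmetric, and the second satisfies $[u(\sigma-h)-u(\sigma)-u(h)]^2 = [u(h-\sigma)-u(h)-u(\sigma)]^2$ again by evenness of $u$. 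Hence I can restrict the region $\{0<\sigma, 0<h\}$ to $\{0<\sigma<h\}$ at the cost of a factor $2$, yielding the claimed formula \eqref{eq:Gamma_2_simplified} with the constant $4c_{\beta,1}^2$.

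The only real point requiring care — the "main obstacle," though it is mild — is justifying the splitting and the changes of variables at the level of absolutely convergent integrals: one should note that the integrand is nonnegative, so by Tonelli the manipulations (splitting the domain, substituting, and reordering) are all legitimate regardless of finiteness, and the resulting identity holds in $[0,\infty]$; under the stated regularity and growth hypotheses on $u$ the value is in fact finite. I would also double-check the bookkeeping of constants: no Jacobian factors arise from the sign-flip substitutions $h\mapsto-h$, $\sigma\mapsto-\sigma$ (the kernel uses absolute values), so the accumulated factor is exactly $2\times 2 = 4$.
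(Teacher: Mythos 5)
Your proof is correct and follows essentially the same route as the paper, which also uses the symmetries $A(h,\sigma)=A(-h,-\sigma)$ (evenness of $u$) and $A(h,\sigma)=A(\sigma,h)$ together with Fubini/Tonelli to fold the plane integral onto the region $0<\sigma<h$, yielding the factor $4$ and the two terms with $h+\sigma$ and $h-\sigma$. Your version merely spells out the quadrant decomposition and the nonnegativity justification in more detail.
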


\begin{proof}
Denote the integrand by $A(h,\sigma) = \frac{[u(h+\sigma)- u(h) - u(\sigma)]^2}{h^{{ {1}}+\beta} \sigma^{{ {1}}+\beta}}$, then $A(h,\sigma) = A(-h, -\sigma)$ and $A(h, \sigma ) =A(\sigma ,h)$. The result follows from Fubini's Theorem.
\end{proof}

 \begin{lemma} \label{lemma:Reduction}
  {Let $u \colon \R^d \to \R$ be smooth and growing slowly enough and assume that $u$ depends only on one variable, i.e. $u(x_1, \ldots, x_d) = v(x_1)$ for some $v\colon \R \to \R$. Denote the fractional Laplacian on $\R^d$ temporarily by $\mathcal{L}^{(d)}$, similarly for the corresponding $\Gamma$-operators. 
  Then}
  \[
 \mathcal{L}^{(d)}u(0) = A_{d,\beta}\mathcal{L}^{(1)}(v)(0),\quad  \Gamma^{(d)}(u)(0) = A_{d,\beta}\Gamma^{(1)}(v)(0), \quad \text{and} \quad \Gamma_2^{(d)}(u)(0) = A_{d,\beta}^2 \Gamma^{(1)}_2(v)(0),
  \]
  where $A_{d,\beta} > 0$ is a constant.
 \end{lemma}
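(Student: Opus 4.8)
The plan is to compute directly from the representation formulas in Lemma~\ref{represent}, using the product structure of the kernel to separate the $x_1$-direction from the remaining $d-1$ directions. Write a generic point of $\R^d$ as $h = (h_1, h')$ with $h_1 \in \R$ and $h' \in \R^{d-1}$, and note $|h|^{d+\beta} = (h_1^2 + |h'|^2)^{(d+\beta)/2}$. Since $u(x) = v(x_1)$ and we evaluate at $x = 0$, in the formula \eqref{eq:Gamma} for $\Gamma^{(d)}(u)(0)$ the numerator $(u(h) - u(0))^2 = (v(h_1) - v(0))^2$ depends only on $h_1$. Therefore
\begin{equation*}
\Gamma^{(d)}(u)(0) = c_{\beta,d} \int_{\R} (v(h_1) - v(0))^2 \left( \int_{\R^{d-1}} \frac{\diff h'}{(h_1^2 + |h'|^2)^{(d+\beta)/2}} \right) \diff h_1.
\end{equation*}
The inner integral is evaluated by the substitution $h' = |h_1| z$, which gives $\int_{\R^{d-1}} (h_1^2 + |h'|^2)^{-(d+\beta)/2} \diff h' = |h_1|^{-1-\beta} K_{d,\beta}$ where $K_{d,\beta} := \int_{\R^{d-1}} (1 + |z|^2)^{-(d+\beta)/2} \diff z < \infty$ (finite because $d + \beta > d - 1$). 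Hence $\Gamma^{(d)}(u)(0) = c_{\beta,d} K_{d,\beta} \int_\R (v(h_1) - v(0))^2 |h_1|^{-1-\beta} \diff h_1 = (c_{\beta,d} K_{d,\beta} / c_{\beta,1}) \, \Gamma^{(1)}(v)(0)$. This identifies the constant as $A_{d,\beta} := c_{\beta,d} K_{d,\beta} / c_{\beta,1} > 0$. The same computation applied to $\mathcal{L}^{(d)} u(0)$, using the symmetric-difference form in \eqref{eq:DefLaplace} (numerator $u(h) - 2u(0) + u(-h) = v(h_1) - 2v(0) + v(-h_1)$ depending only on $h_1$), yields $\mathcal{L}^{(d)} u(0) = A_{d,\beta} \mathcal{L}^{(1)} v(0)$ with the identical constant, as required.

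For the $\Gamma_2$ identity I would do the analogous but two-fold reduction on \eqref{eq:Gamma2}. Writing $h = (h_1, h')$, $\sigma = (\sigma_1, \sigma')$, the numerator is
\begin{equation*}
[u(h+\sigma) - u(h) - u(\sigma) + u(0)]^2 = [v(h_1 + \sigma_1) - v(h_1) - v(\sigma_1) + v(0)]^2,
\end{equation*}
which depends only on $(h_1, \sigma_1)$. The kernel factorises as $|h|^{-d-\beta} |\sigma|^{-d-\beta}$, so the integrations over $h'$ and over $\sigma'$ decouple and each contributes a factor $K_{d,\beta} |h_1|^{-1-\beta}$ respectively $K_{d,\beta} |\sigma_1|^{-1-\beta}$, exactly as before. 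Pulling these out gives
\begin{equation*}
\Gamma_2^{(d)}(u)(0) = c_{\beta,d}^2 K_{d,\beta}^2 \int_\R \int_\R \frac{[v(h_1+\sigma_1) - v(h_1) - v(\sigma_1) + v(0)]^2}{|h_1|^{1+\beta} |\sigma_1|^{1+\beta}} \diff h_1 \diff \sigma_1 = \left( \frac{c_{\beta,d} K_{d,\beta}}{c_{\beta,1}} \right)^2 \Gamma_2^{(1)}(v)(0) = A_{d,\beta}^2 \, \Gamma_2^{(1)}(v)(0).
\end{equation*}

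The only genuine point requiring care is the justification of interchanging the order of integration (Fubini/Tonelli) when separating the $h'$- and $\sigma'$-variables, and the finiteness and exact scaling of the auxiliary integral $K_{d,\beta}$. Since $u$ is assumed smooth and slowly growing — in particular all the integrals defining $\mathcal{L}^{(d)} u(0)$, $\Gamma^{(d)}(u)(0)$, $\Gamma_2^{(d)}(u)(0)$ converge absolutely — Tonelli applies to the (nonnegative, for $\Gamma$ and $\Gamma_2$) integrands without difficulty, and for $\mathcal{L}^{(d)}$ one splits the numerator via a second-order Taylor estimate near $h = 0$ to get absolute convergence and then applies Fubini. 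The convergence of $K_{d,\beta}$ near $z = \infty$ needs $d + \beta > d-1$, i.e. $\beta > -1$, which holds since $\beta \in (0,2)$; there is no singularity at $z = 0$. I do not expect any substantive obstacle beyond bookkeeping; the whole statement is essentially the observation that the $d$-dimensional power kernel, integrated out in the transverse directions, reproduces the one-dimensional power kernel up to the explicit constant $K_{d,\beta}$.
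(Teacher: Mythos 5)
Your proof is correct and follows essentially the same route as the paper: both reduce the $d$-dimensional kernel to the one-dimensional one by integrating out the transverse variables, the only cosmetic difference being that you evaluate the $(d-1)$-dimensional transverse integral in one step by the scaling $h'=|h_1|z$, whereas the paper computes it inductively one coordinate at a time (so your $K_{d,\beta}$ equals the paper's $C_{\frac{d+\beta}{2},d-1}$). The factorisation argument you spell out for $\Gamma_2$ is exactly what the paper summarises with ``follow analogously.''
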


 \begin{proof}
  We will use that for $x\neq0$, $2\alpha > 1$ one obtains
     \[
   \int_{\R}\frac{1}{(x{ {^2}} + y^2)^{\alpha}} \diff y 
   = \frac{1}{x^{{{ {2}}\alpha}}}\int_{\R}\frac{1}{(1+ (\frac{y}{{ {x}}})^2)^{\alpha}} \diff y 
   =\frac{1}{x^{ {2\alpha -1}}}\int_{\R}\frac{1}{(1+ z^2)^{\alpha}} \diff z 
   =  \frac{B_\alpha}{x^{ {2\alpha -1}}},
  \]
  where $B_\alpha = \int_{\R}{(1+ z^2)^{-\alpha}} \diff z$
  is independent of $x$.
  Thus, one inductively finds for $2 \alpha > n$ that
  \[
   \int_{\R^{n}}\frac{1}{(x^2 + y_1^2 + \ldots +y_n^2)^{\alpha}} \diff y = \frac{C_{\alpha,n}}{|x|^{2\alpha - n}}
  \]
  with $C_{\alpha,n} = \Pi_{j=0}^{n-1} B_{\alpha - \frac{j}{2}}$.

Whence, we find
\begin{align*}
 \mathcal{L}^{(d)}(u)(0) 
 &= c_{\beta,d}\int_{\R}\left(v(h_1)-2v(0) + v(-h_1) \right)\int_{\R^{d-1}}\frac{1}{(h_1^2 + h_2^2 + \ldots + h_d^2)^{\frac{d+\beta}{2}}}\diff (h_2,\ldots,h_d) \diff h_1 \\
 &=c_{\beta,d}C_{\frac{d+\beta}{2},d-1} \int_\R \frac{v(h_1)-2v(0) + v(-h_1)}{|h_1|^{1+\beta}} \diff h_1\\
 &= \frac{ c_{\beta,d} C_{\frac{d+\beta}{2},d-1}}{c_{\beta,1}} \mathcal{L}^{(1)}(v)(0)
 = A_{d,\beta} \mathcal{L}^{(1)}(v)(0)
\end{align*}
with $A_{d,\beta} = \frac{ c_{\beta,d} C_{\frac{d+\beta}{2},d-1}}{c_{\beta,1}}$. The formulas for $\Gamma^{(d)}(u)$ and $\Gamma_2^{(d)}(u)$ follow analogously.
 \end{proof}

\section{Proof of the Main Result}

 In this section we gradually construct our counterexample for \eqref{eq:BE}. First, we construct an unbounded but admissible smooth function with unbounded support such that \eqref{eq:BE} fails at $x=0$. With an elaborate approximation scheme we subsequently construct a smooth and compactly supported function such that \eqref{eq:BE} fails at $x=0$, which will be used to show \eqref{eq:BEInequ} using a continuity and a rescaling argument.

\subsection{The Unbounded Counterexample with Unbounded Support}
 In this subsection we construct a class of unbounded functions with noncompact support lying in the domain of both $\mathcal{L}$ and $\Gamma_2$ such that \eqref{eq:BE} fails. 

\begin{mythm}\label{thm:MainTheoremExact} Let $0<\beta < 2$, and $u_\varepsilon \in \mathcal{C}^1(\R,\R)$ be an even function defined by
 \[
  u_\varepsilon(x) = \begin{cases}
          \lvert x \rvert^{\beta - \varepsilon}, & \lvert x \rvert\geq 1,\\
          \phi(x), & \lvert x \rvert<1,
         \end{cases}
 \]
where $0<\varepsilon < \frac{\beta}{2}$ (and additionally $\varepsilon < \frac{\beta -1}{2}$ if $\beta > 1$), and assume there exist $\Lambda>0,\delta > 0$ independent of $\varepsilon$ with $\beta + \delta > 1$ such that $\phi$ satisfies 
\[ 0 \leq \phi(x) \leq \Lambda x^{\beta + \delta},  \qquad |\phi'(x)|\leq \Lambda x^{\beta + \delta-1}\quad \text{ for }\lvert x \rvert\leq 1.
\]
Then there exist constants $C_0>0, C_1 > 0$ such that
\[
 \mathcal{L}(u_\varepsilon)(0)  \geq  \frac{C_0}{\varepsilon}, \qquad 0<\Gamma_2(u_\varepsilon)(0)  \leq\frac{ C_1 }{\varepsilon}.
 \]
In particular, for all $\mu > 0$ there exists some $u$ such that 
 $ 0 < \Gamma_2(u)(0) <\mu \left(\mathcal{L}(u)(0)\right)^2 < \infty$.
\end{mythm}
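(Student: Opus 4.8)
The plan is to establish the two displayed estimates and then read off the final assertion. Granting them, from $\mathcal{L}(u_\varepsilon)(0)\geq C_0/\varepsilon>0$ I get $(\mathcal{L}(u_\varepsilon)(0))^2\geq C_0^2/\varepsilon^2$, hence together with $\Gamma_2(u_\varepsilon)(0)\leq C_1/\varepsilon$,
\[
0<\frac{\Gamma_2(u_\varepsilon)(0)}{(\mathcal{L}(u_\varepsilon)(0))^2}\leq \frac{C_1}{C_0^2}\,\varepsilon\xrightarrow{\varepsilon\to0}0,
\]
so for given $\mu>0$ it suffices to choose $\varepsilon>0$ small (still obeying $\varepsilon<\beta/2$, and $\varepsilon<(\beta-1)/2$ when $\beta>1$) and put $u=u_\varepsilon$; all quantities are finite by the estimates themselves. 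Everything reduces to the one-dimensional integral formulas \eqref{eq:Gamma}, \eqref{eq:Gamma2} and \eqref{eq:Gamma_2_simplified}, which apply since $u_\varepsilon\in\mathcal{C}^1(\R)$ is even with $u_\varepsilon(0)=\phi(0)=0$ (forced by $0\leq\phi(x)\leq\Lambda x^{\beta+\delta}$) and grows only polynomially.

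For the lower bound on $\mathcal{L}(u_\varepsilon)(0)$ I would use that, $u_\varepsilon$ being even with $u_\varepsilon(0)=0$, \eqref{eq:DefLaplace} collapses to $\mathcal{L}(u_\varepsilon)(0)=4c_{\beta,1}\int_0^\infty h^{-1-\beta}u_\varepsilon(h)\diff h$. Splitting at $h=1$: on $[1,\infty)$ one has $u_\varepsilon(h)=h^{\beta-\varepsilon}$, giving $\int_1^\infty h^{-1-\varepsilon}\diff h=1/\varepsilon$, whereas the contribution of $[0,1)$ is nonnegative (and $\leq\Lambda/\delta<\infty$ by $\phi\leq\Lambda x^{\beta+\delta}$ and $\delta>0$). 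Hence $\mathcal{L}(u_\varepsilon)(0)\geq 4c_{\beta,1}/\varepsilon$ and is finite, so $C_0=4c_{\beta,1}$ works.

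The upper bound on $\Gamma_2(u_\varepsilon)(0)$ is the technical heart. Starting from \eqref{eq:Gamma_2_simplified}, I would decompose $\{0<\sigma<h<\infty\}$ according to the positions of $h,\sigma,h-\sigma$ relative to $1$, this being exactly where the two regimes of $u_\varepsilon$ interact. On $\{0<\sigma\leq h<1,\,h+\sigma<1\}$ I would write the two second differences as $\int_0^\sigma[\phi'(h+s)-\phi'(s)]\diff s$ and $-\int_0^\sigma[\phi'(h-\sigma+s)+\phi'(s)]\diff s$ and bound them by $O(\sigma h^{\beta+\delta-1})$ using $|\phi'(x)|\leq\Lambda x^{\beta+\delta-1}$ and $\beta+\delta-1>0$; the resulting double integral is a finite constant because $\delta>0$. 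The remaining bounded piece ($h+\sigma\geq1$, forcing $h\geq1/2$) I would treat near $h=1$ using only $\mathcal{C}^1$-regularity, which makes the second difference $O(\sigma)$. On $\{0<\sigma<1\leq h\}$, where $u_\varepsilon(h\pm\sigma)$ is a pure power on the part with $h-\sigma\geq1$, the mean value theorem yields second differences of size $O(\sigma h^{\beta-\varepsilon-1})+O(\sigma^{\beta+\delta})$ and hence an $O(1)$ contribution uniform in small $\varepsilon$; the complementary bounded strip $1\leq h<1+\sigma$ is again handled by $\mathcal{C}^1$-regularity. On the diagonal strip $\{1\leq\sigma<h<\sigma+1\}$, where $u_\varepsilon(h-\sigma)=\phi(h-\sigma)$, I would use $h<2\sigma$ to bound both second differences by $O(\sigma^{\beta-\varepsilon})$, giving a contribution at most a constant times $\int_1^\infty\sigma^{-2-2\varepsilon}\diff\sigma=O(1)$. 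Finally, on the bulk $\{1\leq\sigma<h,\ h-\sigma\geq1\}$ (so $h\geq2$) all values of $u_\varepsilon$ are the pure power $|\cdot|^{\beta-\varepsilon}$, and the substitution $\sigma=th$ turns the integrand into $h^{-1-2\varepsilon}\,t^{-1-\beta}\big([(1+t)^{\beta-\varepsilon}-1-t^{\beta-\varepsilon}]^2+[(1-t)^{\beta-\varepsilon}-1-t^{\beta-\varepsilon}]^2\big)$; bounding the $h$-dependent $t$-range by $(0,1)$ gives, with $\gamma=\beta-\varepsilon$,
\[
4c_{\beta,1}^2\Big(\int_2^\infty h^{-1-2\varepsilon}\diff h\Big) I(\gamma)\ \leq\ \frac{2c_{\beta,1}^2}{\varepsilon}\,I(\gamma),\qquad I(\gamma):=\int_0^1\frac{[(1+t)^\gamma-1-t^\gamma]^2+[(1-t)^\gamma-1-t^\gamma]^2}{t^{1+\beta}}\diff t,
\]
and I would check that $I(\gamma)$ stays bounded for $\gamma$ in a neighbourhood of $\beta$ inside the admissible range: near $t=0$ the numerator is $O\big(t^{2\min(\gamma,1)}\big)$, and its exponent exceeds $1+\beta$ thanks to $\varepsilon<\beta/2$ when $\beta\leq1$ and to $\gamma>1$ (guaranteed by $\varepsilon<(\beta-1)/2$) when $\beta>1$, while near $t=1$ the numerator is bounded. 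Adding the $O(1)$ contributions of the other regions gives $\Gamma_2(u_\varepsilon)(0)\leq C_1/\varepsilon$ for all small $\varepsilon$. Positivity is immediate since at $h=\sigma$ the second term of the integrand in \eqref{eq:Gamma_2_simplified} is $4u_\varepsilon(h)^2/h^{2+2\beta}$, equal to $4$ at $h=1$, and the integrand is continuous there.

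The hard part will be the bookkeeping in the $\Gamma_2$-estimate: arranging the region decomposition so that every piece except the bulk is bounded by a constant independent of $\varepsilon$ while keeping all integrals convergent. The two genuinely delicate spots are the diagonal $h\approx\sigma$, where the kernel regime switches and $u_\varepsilon(h-\sigma)-u_\varepsilon(h)-u_\varepsilon(\sigma)$ is controlled only because it stays away from the origin, and the corner $h,\sigma\to0$, where no second-order Taylor expansion is available and one must use solely $\mathcal{C}^1$-regularity together with the $\varepsilon$-independent polynomial bounds on $\phi$ and $\phi'$ --- which is precisely why the hypotheses $\beta+\delta>1$ and $\delta>0$ are imposed.
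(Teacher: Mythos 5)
Your proposal is correct and follows essentially the same route as the paper: the same lower bound for $\mathcal{L}(u_\varepsilon)(0)$ by splitting at $h=1$, and the same decomposition of $\{0<\sigma<h\}$ according to the positions of $h,\sigma,h\pm\sigma$ relative to $1$, with mean-value/$\phi'$ bounds giving $O(1)$ contributions off the bulk and the pure-power region $\{1\le\sigma,\ h-\sigma\ge 1\}$ producing the $1/\varepsilon$; your substitution $\sigma=th$ and the boundedness of $I(\gamma)$ is exactly the content of the paper's Lemma \ref{lem:mixedbrackets} on $f(x)=(1+x)^\gamma-1-x^\gamma$ and $g(x)=1+x^\gamma-(1-x)^\gamma$. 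One small slip: near $t=0$ the numerator exponent $2\min(\gamma,1)$ need only exceed $\beta$ (not $1+\beta$) for $I(\gamma)$ to converge, and the hypotheses $\varepsilon<\beta/2$ (resp.\ $\varepsilon<\frac{\beta-1}{2}$ when $\beta>1$) do guarantee this, so the conclusion stands.
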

\begin{remark}\label{rem:cutoffzero}
 Let $u(x) = \lvert x \rvert^{\beta - \varepsilon} \eta(x)$, where $\eta \geq 0$ is a smooth and even cut-off function satisfying $\eta (x) = 0$ for $\lvert x \rvert \leq \frac{1}{4}$ and $\eta (x) = 1$ for $\lvert x \rvert \geq \frac{3}{4}$. Then $u$ satisfies the assumptions of Theorem \ref{thm:MainTheoremExact}.
 \end{remark}
 
 We may extend the previous counterexample to higher spatial dimensions.

\begin{mythm}\label{thm:higherdimension}
 Let $d \geq 2$ and $v(x_1, \ldots, x_d) = u(x_1)$, where $u=u_\varepsilon$ is given as in Theorem \ref{thm:MainTheoremExact}.
 Then there exist constants $C_2>0, C_3 > 0$ such that
\[
 \mathcal{L}(v)(0) \geq \frac{C_2}{\varepsilon}, \qquad 0<\Gamma_2(v)(0) \leq\frac{ C_3 }{\varepsilon}
\]
for all $\varepsilon > 0$ small enough.
In particular, for all $\mu > 0$ there exists some $v$ such that 
$ 0 < \Gamma_2(v)(0) <\mu \left(\mathcal{L}(v)(0)\right)^2 < \infty$.
\end{mythm}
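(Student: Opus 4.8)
The plan is to reduce Theorem \ref{thm:higherdimension} essentially verbatim to the one-dimensional result, Theorem \ref{thm:MainTheoremExact}, by exploiting the fact that $v(x_1,\ldots,x_d)=u(x_1)$ depends on a single coordinate. First I would invoke Lemma \ref{lemma:Reduction}, which was tailor-made for this: since $v$ is of the form $v(x)=u(x_1)$ with $u=u_\varepsilon$, we immediately get
\[
\mathcal{L}^{(d)}(v)(0) = A_{d,\beta}\,\mathcal{L}^{(1)}(u_\varepsilon)(0),\qquad
\Gamma_2^{(d)}(v)(0) = A_{d,\beta}^2\,\Gamma_2^{(1)}(u_\varepsilon)(0),
\]
where $A_{d,\beta}>0$ is a fixed constant depending only on $d$ and $\beta$ (in particular independent of $\varepsilon$). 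One should briefly check that the hypotheses of Lemma \ref{lemma:Reduction} apply, i.e. that $u_\varepsilon$ is smooth enough and grows slowly enough for the relevant integrals — including the iterated double integral defining $\Gamma_2^{(d)}$ — to converge; this is guaranteed by the same growth bound $u_\varepsilon(x)\sim |x|^{\beta-\varepsilon}$ with $\beta-\varepsilon<2$ that makes the one-dimensional quantities finite, together with the inductive Fubini-type computation $\int_{\R^{n}}(x^2+|y|^2)^{-\alpha}\,dy = C_{\alpha,n}|x|^{n-2\alpha}$ already established there.

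Second, I would feed the one-dimensional estimates from Theorem \ref{thm:MainTheoremExact} into these identities. That theorem gives constants $C_0,C_1>0$ with $\mathcal{L}^{(1)}(u_\varepsilon)(0)\ge C_0/\varepsilon$ and $0<\Gamma_2^{(1)}(u_\varepsilon)(0)\le C_1/\varepsilon$ for all admissible small $\varepsilon$. Multiplying through by the (positive, $\varepsilon$-independent) constants $A_{d,\beta}$ and $A_{d,\beta}^2$ yields
\[
\mathcal{L}^{(d)}(v)(0) \ge \frac{A_{d,\beta}C_0}{\varepsilon} =: \frac{C_2}{\varepsilon},
\qquad
0 < \Gamma_2^{(d)}(v)(0) \le \frac{A_{d,\beta}^2 C_1}{\varepsilon} =: \frac{C_3}{\varepsilon},
\]
which is exactly the claimed pair of bounds with $C_2 := A_{d,\beta}C_0$ and $C_3 := A_{d,\beta}^2 C_1$. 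The strict positivity of $\Gamma_2^{(d)}(v)(0)$ is inherited from the strict positivity of $\Gamma_2^{(1)}(u_\varepsilon)(0)$ since $A_{d,\beta}>0$.

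Third, for the ``in particular'' clause, given $\mu>0$ I would choose $\varepsilon$ small enough that
\[
\frac{C_3}{\varepsilon} < \mu\Big(\frac{C_2}{\varepsilon}\Big)^2,
\]
i.e. $\varepsilon < \mu C_2^2/C_3$ (and also small enough to respect the constraints $\varepsilon<\beta/2$, and $\varepsilon<(\beta-1)/2$ when $\beta>1$, inherited from Theorem \ref{thm:MainTheoremExact}). For any such $\varepsilon$ the function $v=v_\varepsilon$ satisfies
\[
0 < \Gamma_2^{(d)}(v)(0) \le \frac{C_3}{\varepsilon} < \mu\Big(\frac{C_2}{\varepsilon}\Big)^2 \le \mu\big(\mathcal{L}^{(d)}(v)(0)\big)^2 < \infty,
\]
using $\mathcal{L}^{(d)}(v)(0)\ge C_2/\varepsilon>0$ in the last lower bound. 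This is precisely the asserted inequality, and it shows that $CD(0,N)$ fails at the origin in $\R^d$ for every finite $N$ (take $\mu=1/N$).

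I do not expect any genuine obstacle here: the real content of the argument lives entirely in the one-dimensional Theorem \ref{thm:MainTheoremExact} and in the dimensional-reduction Lemma \ref{lemma:Reduction}, both of which we may assume. The only point requiring a moment's care is confirming that $v$ (which is unbounded with noncompact support) still lies in the domain of $\mathcal{L}^{(d)}$ and $\Gamma_2^{(d)}$, so that Lemma \ref{lemma:Reduction} is legitimately applicable — but since $v$ is constant in $d-1$ of the variables and behaves like $|x_1|^{\beta-\varepsilon}$ in the remaining one, the slicing integrals converge by the explicit computation in the proof of Lemma \ref{lemma:Reduction}, and the growth is slow enough ($\beta-\varepsilon<2$) for the outer integrals to converge as well.
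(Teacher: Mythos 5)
Your proposal is correct and follows exactly the paper's route: the paper proves this theorem by citing Lemma \ref{lemma:Reduction} together with Theorem \ref{thm:MainTheoremExact}, which is precisely the reduction you carry out (with the same constants $C_2=A_{d,\beta}C_0$, $C_3=A_{d,\beta}^2C_1$ implicit). Your additional remarks on admissibility of $v$ and the choice of small $\varepsilon$ are fine elaborations of what the paper leaves unstated.
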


\begin{proof}
 This is a direct consequence of Lemma \ref{lemma:Reduction} and Theorem \ref{thm:MainTheoremExact}.
\end{proof}

  For the proof of Theorem \ref{thm:MainTheoremExact} we need the following lemmata.
\begin{lemma}\label{lem:mvtforueps}
Let $u_\varepsilon$ be as in Theorem \ref{thm:MainTheoremExact} and $0 \leq x \leq y $. Then we have
\begin{align*}
\left(u_\varepsilon(y) - u_\varepsilon(x) \right)^2 \leq \begin{cases} \beta^2 \max\{ x^{2\beta -2 \varepsilon - 2} , y^{2\beta - 2 \varepsilon -2} \} (y-x)^2, & x \geq 1 \\
\Lambda^2y^{2\beta + 2\delta-2}(y-x)^2, &y \leq 1 \\
(\beta + \Lambda)^2\max\{1, y^{2\beta-2\varepsilon-2} \} (y-x)^2,& \text{else}.
\end{cases}
\end{align*}
\end{lemma}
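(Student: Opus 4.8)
The plan is to obtain all three estimates from the mean value theorem, exploiting that $u_\varepsilon \in \mathcal{C}^1(\R,\R)$ by construction. Fix $0 \le x \le y$. In each of the three regimes there is $\xi \in [x,y]$ with $u_\varepsilon(y) - u_\varepsilon(x) = u_\varepsilon'(\xi)(y-x)$, so it suffices to bound $|u_\varepsilon'(\xi)|$ on the relevant interval and square. The only point requiring care is the sign of the exponent $\beta - \varepsilon - 1$, which may be positive or negative depending on $\beta$ and $\varepsilon$; writing the final bounds as a maximum over endpoint values absorbs both possibilities.

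In the case $x \ge 1$ one has $[x,y] \subseteq [1,\infty)$, so $u_\varepsilon(t) = t^{\beta-\varepsilon}$ there and $|u_\varepsilon'(\xi)| = (\beta-\varepsilon)\xi^{\beta-\varepsilon-1} \le \beta\, \xi^{\beta-\varepsilon-1}$; since $t \mapsto t^{\beta-\varepsilon-1}$ is monotone, its maximum on $[x,y]$ is attained at an endpoint, giving $\xi^{\beta-\varepsilon-1} \le \max\{x^{\beta-\varepsilon-1}, y^{\beta-\varepsilon-1}\}$, and squaring yields the first inequality. In the case $y \le 1$ one has $[x,y] \subseteq [0,1]$, so $u_\varepsilon = \phi$ there and $|u_\varepsilon'(\xi)| = |\phi'(\xi)| \le \Lambda\, \xi^{\beta+\delta-1} \le \Lambda\, y^{\beta+\delta-1}$, where the last step uses $\beta + \delta > 1$ so that the exponent is nonnegative; squaring gives the second inequality. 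For the mixed case $x < 1 < y$ I would split $u_\varepsilon(y) - u_\varepsilon(x) = \bigl(u_\varepsilon(y) - u_\varepsilon(1)\bigr) + \bigl(u_\varepsilon(1) - u_\varepsilon(x)\bigr)$ at the interface $t = 1$. Applying the first case with $x$ replaced by $1$ gives $|u_\varepsilon(y) - u_\varepsilon(1)| \le \beta \max\{1, y^{\beta-\varepsilon-1}\}(y-1) \le \beta \max\{1, y^{\beta-\varepsilon-1}\}(y-x)$, and the estimate on $\phi'$ on $[x,1] \subseteq [0,1]$ together with $\beta+\delta-1 \ge 0$ gives $|u_\varepsilon(1) - u_\varepsilon(x)| \le \Lambda(1-x) \le \Lambda(y-x)$. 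Adding these and using $\max\{1, y^{\beta-\varepsilon-1}\} \ge 1$ produces $|u_\varepsilon(y) - u_\varepsilon(x)| \le (\beta + \Lambda)\max\{1, y^{\beta-\varepsilon-1}\}(y-x)$, and squaring yields the third inequality.

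There is no serious obstacle here; the argument is entirely elementary. The mild technical point is the handling of the exponent sign in the first and mixed cases — when $\beta - \varepsilon - 1 < 0$ the power $\xi^{\beta-\varepsilon-1}$ is controlled by its value at the left endpoint ($1$ in the mixed case, which is precisely why the term $\max\{1,\cdot\}$ is needed), and when $\beta - \varepsilon - 1 \ge 0$ by its value at the right endpoint. Note also that only the gradient bound $|\phi'(x)| \le \Lambda x^{\beta+\delta-1}$ on $\phi$ enters; the pointwise bound $0 \le \phi(x) \le \Lambda x^{\beta+\delta}$ plays no role in this lemma and will be used in the subsequent estimates.
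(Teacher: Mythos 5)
Your argument is correct and is exactly the route the paper takes: its proof consists of the single remark that the lemma is a straightforward application of the mean value theorem together with the properties of $u_\varepsilon$, and your case analysis (endpoint monotonicity of $\xi^{\beta-\varepsilon-1}$, the sign $\beta+\delta-1>0$, and splitting the mixed case at the interface $t=1$) fills in precisely those details correctly.
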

\begin{proof}
This is a straightforward application of the mean value theorem together with the properties of $u_\varepsilon$.
\end{proof}

\begin{lemma}\label{lem:mixedbrackets}
\label{eq:klammermixedminus}
Let $u_\varepsilon$ be as in Theorem \ref{thm:MainTheoremExact} and $1 \leq \sigma \leq h $. Then
\begin{equation}\label{eq:klammermixedplus}
\left(u_\varepsilon(h\pm\sigma) - u_\varepsilon(h) - u_\varepsilon(\sigma) \right)^2
\leq \begin{cases}16 h^{2\beta-2\varepsilon -2} \sigma^2, &  \beta > 1, \\
4\sigma^{2\beta - 2\varepsilon}, & \beta \leq 1,
\end{cases}
\end{equation}
{ {where we additionally assume $h - \sigma \geq 1$ in the case of $`-`$.}}
\end{lemma}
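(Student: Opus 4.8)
The plan is to observe that under the hypotheses all three arguments appearing inside $u_\varepsilon$ have absolute value at least $1$: indeed $h\ge\sigma\ge1$ gives $h+\sigma\ge2$, $h\ge1$ and $\sigma\ge1$, while the extra assumption $h-\sigma\ge1$ takes care of the remaining argument in the $-$ case. Hence by the very definition of $u_\varepsilon$ one has $u_\varepsilon(t)=t^{\beta-\varepsilon}$ for each $t\in\{h+\sigma,h-\sigma,h,\sigma\}$ (those relevant in the respective case), and, abbreviating $p:=\beta-\varepsilon$, the whole statement reduces to elementary estimates for $t\mapsto t^p$. The smallness conditions on $\varepsilon$ are precisely what is needed to put $p$ in the right regime: $0<p<1$ when $\beta\le1$ and $1<p<2$ when $\beta>1$, so the two cases of the lemma correspond exactly to the concave and the convex case of the power function.

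In the case $\beta\le1$ (so $0<p\le1$) I would only use the subadditivity inequality $(a+b)^p\le a^p+b^p$ for $a,b\ge0$, together with monotonicity of $t\mapsto t^p$. For the $+$ sign this gives at once $-\sigma^p\le(h+\sigma)^p-h^p-\sigma^p\le0$; for the $-$ sign, applying subadditivity to $h^p=\bigl((h-\sigma)+\sigma\bigr)^p$ yields $h^p-(h-\sigma)^p\le\sigma^p$, so that $-2\sigma^p\le(h-\sigma)^p-h^p-\sigma^p\le-\sigma^p$. In either case $\bigl|u_\varepsilon(h\pm\sigma)-u_\varepsilon(h)-u_\varepsilon(\sigma)\bigr|\le2\sigma^p$, and squaring gives the claimed $4\sigma^{2\beta-2\varepsilon}$ (with room to spare in the $+$ case).

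In the case $\beta>1$ (so $1<p<2$) I would instead use the mean value theorem. For the $+$ sign, $(h+\sigma)^p-h^p=p\,\xi^{p-1}\sigma$ with $\xi\in(h,h+\sigma)\subset(h,2h)$, hence $\xi^{p-1}\le2^{p-1}h^{p-1}\le2h^{p-1}$ and, using $p<2$, $0\le(h+\sigma)^p-h^p\le4h^{p-1}\sigma$; since moreover $0\le\sigma^p=\sigma^{p-1}\sigma\le h^{p-1}\sigma$ by $\sigma\le h$ and $p>1$, the whole bracket $(h+\sigma)^p-h^p-\sigma^p$ lies in $[-4h^{p-1}\sigma,\,4h^{p-1}\sigma]$. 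For the $-$ sign the same argument on $(h-\sigma,h)$ gives $\xi^{p-1}\le h^{p-1}$, hence $0\le h^p-(h-\sigma)^p\le2h^{p-1}\sigma$, and together with $0\le\sigma^p\le h^{p-1}\sigma$ and $(h-\sigma)^p\le h^p$ one again obtains $\bigl|(h-\sigma)^p-h^p-\sigma^p\bigr|\le4h^{p-1}\sigma$. Squaring produces $16h^{2\beta-2\varepsilon-2}\sigma^2$ in both sub-cases.

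There is no genuine obstacle here; the only thing to watch is the bookkeeping of constants so that the stated $16$ and $4$ come out cleanly (it is convenient to be wasteful, e.g.\ replacing $2^{p-1}$ by $2$ and $p$ by $2$, rather than optimizing), and to make sure the hypothesis $h-\sigma\ge1$ is invoked precisely where it is needed, namely to guarantee $u_\varepsilon(h-\sigma)=(h-\sigma)^p$ in the $-$ case. One could also quote Lemma \ref{lem:mvtforueps} for the mean value estimates, but the direct computation above is just as short.
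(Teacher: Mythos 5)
Your proof is correct and follows essentially the same route as the paper: the mean value theorem with $\xi\le 2h$ (resp.\ $\xi\le h$) in the convex regime $\beta>1$, and concavity-type estimates in the regime $\beta\le 1$, after noting that all arguments of $u_\varepsilon$ are $\ge 1$ so that $u_\varepsilon(t)=t^{\beta-\varepsilon}$. The only cosmetic difference is that the paper normalizes by $h$ and studies $f(x)=(1+x)^\gamma-1-x^\gamma$, $g(x)=1+x^\gamma-(1-x)^\gamma$ on $[0,1]$ (proving $g(x)\le 2x^\gamma$ via monotonicity of $x^{-\gamma}g(x)$), whereas you work directly with the power function and obtain the same bounds from subadditivity of $t\mapsto t^{p}$ for $p\le 1$.
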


\begin{proof}
Introduce $f,g:[0,1]\to \R$ by $f(x)=(1+x)^\gamma - 1 -x^\gamma$ and $g(x) = 1+ x ^\gamma - (1-x)^\gamma$, where $0 < \gamma < 2$.
Since $f(0) = 0$ we have for any $x \in [0,1]$  some $0 < \xi < x$ such that $f(x)= f'(\xi) x$ by the mean value theorem. Since for $\gamma \geq 1$ we have $f'(\xi) = \gamma \left( \left(1+\xi\right)^{\gamma-1} - \xi^{\gamma-1} \right) \in (0,4)$ we obtain 
\begin{equation}\label{eq:festimategeq1}
0 \leq f(x) \leq 4 x, \qquad x \in [0,1].
\end{equation}
The same argument shows for $\gamma \geq 1$ the bounds
\begin{equation}\label{eq:gestimategeq1}
0 \leq g(x) \leq 4 x, \qquad x \in [0,1].
\end{equation}
If $\gamma<1$, then $f'(\xi) < 0$ and $f(x) \geq - x^\gamma$, thus
\begin{equation}\label{eq:festimateleq1}
- x^\gamma \leq f(x) \leq 0, \qquad x \in [0,1].
\end{equation} 
Similarly $g(x) \geq x^\gamma \geq 0$, and for an upper bound of $g$ we consider $h(x) := x^{-\gamma} g(x), x > 0$. We can extend $h$ continuously by setting $h(0) = 1$. Since $h'(x) = \gamma x^{-1-\gamma}(1-x)^{\gamma -1} ( 1-(1-x)^{1-\gamma}) > 0$ for $0 < x < 1$ one obtains $h(x) \leq h(1) = 2$, whence
\begin{equation}\label{eq:gestimateleq1}
{ 0\leq {x^\gamma}} \leq g(x) \leq 2 x^\gamma, \qquad x \in [0,1].
\end{equation}
{ {We set $\gamma = \beta - \varepsilon$ and note that $\gamma > 1$ if $\beta > 1$ as well as $\gamma \leq 1$ if $\beta \leq 1$ by the bound of $\varepsilon$ in Theorem \ref{thm:MainTheoremExact}.}} We obtain from the definition of $u_\varepsilon$ that $(u_\varepsilon(h+\sigma) - u_\varepsilon(h) - u_\varepsilon(\sigma))^2 = h^{2\beta - 2\varepsilon} f\left(\frac{\sigma}{h}\right)^2$
 and $(u_\varepsilon(h-\sigma) - u_\varepsilon(h) - u_\varepsilon(\sigma))^2= h^{2\beta - 2\varepsilon} 
 g\left(\frac{\sigma}{h}\right)^2$, 
thus \eqref{eq:klammermixedplus} follows from 
\eqref{eq:festimategeq1}, \eqref{eq:gestimategeq1}, \eqref{eq:festimateleq1} and \eqref{eq:gestimateleq1}.
\end{proof}

We may now show Theorem \ref{thm:MainTheoremExact}.

\begin{proof}[Proof of Theorem \ref{thm:MainTheoremExact}]
 Fix some $\varepsilon_0>0$ sufficiently small and let $0 < \varepsilon < \varepsilon_0$. Throughout this proof we write $a(\varepsilon) \lesssim b(\varepsilon)$ whenever there exists some constant $C > 0$ independent of $\varepsilon$ such that $a(\varepsilon) \leq C b(\varepsilon)$.\\
 On the one hand we obtain due to symmetry
\begin{align*}
\mathcal{L}(u_\varepsilon)(0) 
= 2 { {c_{\beta,1}}} \int\limits_0^\infty \frac{u_\varepsilon (h)}{h^{1+\beta}} \diff h 
= 2 { {c_{\beta,1}}} \left( \int\limits_0^1 \frac{\phi(h)}{h^{1+\beta}} \diff h + \int\limits_1^\infty \frac{h^{\beta-\varepsilon}}{h^{1+\beta}} \diff h \right) 
\geq 2 { {c_{\beta,1}}}\int\limits_1^\infty \frac{h^{\beta-\varepsilon}}{h^{1+\beta}} \diff h = \frac{C_0}{\varepsilon}
\end{align*}
by our assumptions on $\phi$. Clearly, we have $\Gamma_2(u_\varepsilon) > 0$. To show the desired upper bound we will now split the integrals in $\Gamma_2$ into multiple parts. 
From Lemma \ref{lem:simplified} we find
\begin{align*}
&\frac{\Gamma_2(u_\varepsilon)(0)}{4c_{\beta,1}^2} = \int_0^\infty \int_0^h \frac{[u_\varepsilon(h+\sigma)- u_\varepsilon(h) - u_\varepsilon(\sigma)]^2+[u_\varepsilon(h-\sigma)- u_\varepsilon(h) - u_\varepsilon(\sigma)]^2}{h^{d+\beta} \sigma^{d+\beta}} \diff \sigma \diff h \\
&\quad = {\int\limits_0^1 \int\limits_0^h \frac{[u_\varepsilon(h+\sigma)- u_\varepsilon(h) - u_\varepsilon(\sigma)]^2}{h^{1+\beta}\sigma^{1+\beta}}\diff \sigma \diff h} + {\int\limits_0^1 \int\limits_0^h \frac{[u_\varepsilon(h-\sigma)- u_\varepsilon(h) - u_\varepsilon(\sigma)]^2}{h^{1+\beta}\sigma^{1+\beta}} \diff \sigma \diff h} \\
&\qquad + {\int\limits_1^\infty \int\limits_0^1 \frac{[u_\varepsilon(h+\sigma)- u_\varepsilon(h) - u_\varepsilon(\sigma)]^2}{h^{1+\beta}\sigma^{1+\beta}}\diff \sigma \diff h} + {\int\limits_1^\infty \int\limits_0^1 \frac{[u_\varepsilon(h-\sigma)- u_\varepsilon(h) - u_\varepsilon(\sigma)]^2}{h^{1+\beta}\sigma^{1+\beta}} \diff \sigma \diff h} \\
&\qquad + {\int\limits_1^\infty \int\limits_1^h \frac{[u_\varepsilon(h+\sigma)- u_\varepsilon(h) - u_\varepsilon(\sigma)]^2}{h^{1+\beta}\sigma^{1+\beta}}\diff \sigma \diff h} + {\int\limits_1^\infty \int\limits_1^h \frac{[u_\varepsilon(h-\sigma)- u_\varepsilon(h) - u_\varepsilon(\sigma)]^2}{h^{1+\beta}\sigma^{1+\beta}} \diff \sigma \diff h}
\\&\quad =: A_+ + A_- + B_+ + B_- + C_+ + C_-.
\end{align*}
We will show that $A_++A_-+B_++B_- \lesssim 1$ and $C_+ + C_- \lesssim  \frac{1}{\varepsilon}$. In most of the cases we make use of the basic estimate 
\[(u_\varepsilon(h\pm\sigma)-u_\varepsilon(h)-u_\varepsilon(\sigma))^2 \lesssim u_\varepsilon(\sigma)^2 + ( u_\varepsilon(h\pm\sigma) - u_\varepsilon(h))^2.\] 
Moreover, since $u_\varepsilon(\sigma)^2 = \phi(\sigma)^2$ for $\sigma \leq 1$ we obtain\begin{align*}
\int\limits_0^1 \frac{1}{h^{1+\beta}}\int\limits_0^h \frac{\phi(\sigma)^2}{\sigma^{1+\beta}} \diff \sigma \diff h \lesssim \int\limits_0^1 \frac{1}{h^{1+\beta}} \int\limits_0^h \sigma^{\beta+2\delta -1} \diff \sigma \diff h \lesssim \int\limits_0^1 \frac{1}{h^{1- 2 \delta}} \diff h \lesssim 1.
\end{align*}
Hence, for the integrals  $A_+$ and $A_-$ we merely have to estimate the corresponding expressions involving the term $\left( u(h\pm\sigma) - u(h)\right)^2$. The same observation holds for the integrals $B_+$ and $B_-$, since
\begin{align*}
\int\limits_1^\infty \frac{1}{h^{1+\beta}} \int\limits_0^1 \frac{\phi(\sigma)^2}{\sigma^{1+\beta}} \diff \sigma \diff h \lesssim \int\limits_1^\infty \frac{1}{h^{1+\beta}} \int\limits_0^1 \sigma^{\beta+ 2\delta -1} \diff \sigma \lesssim 1.
\end{align*}
\underline{$A_+$:} We split the remaining integral into 
\begin{align*}
\int\limits_0^\frac{1}{2}\int\limits_\sigma^{1-\sigma}\frac{\left(u_\varepsilon(h+\sigma) - u_\varepsilon(h))\right)^2}{\sigma^{1+\beta} h^{1+\beta}} \diff h \diff \sigma + \int\limits_{\frac{1}{2}}^1 \int\limits_{1-h}^h  \frac{\left(u_\varepsilon(h+\sigma)-u_\varepsilon(h)\right)^2}{h^{1+\beta}\sigma^{1+\beta}} \diff \sigma \diff h.
\end{align*}
In the first integral we have $h+\sigma \leq 1$, thus we obtain $\left(u_\varepsilon(h+\sigma) - u_\varepsilon(h)\right)^2 \lesssim \sigma^2 (h+\sigma)^{2\beta + 2\delta -2} \lesssim \sigma^2 h^{2\beta +2 \delta -2}$ from Lemma \ref{lem:mvtforueps}. Whence
\begin{align*}
\int\limits_0^\frac{1}{2}\int\limits_\sigma^{1-\sigma} \frac{\left( u_\varepsilon(h+\sigma)-u_\varepsilon(h)\right)^2}{\sigma^{1+\beta}h^{1+\beta}}\diff h \diff \sigma &\lesssim \int\limits_0^\frac{1}{2} \sigma^{1-\beta} \int\limits_\sigma^{1-\sigma} h^{\beta + 2\delta -3} \diff h \diff \sigma \lesssim 1
\end{align*}
since the last integral can be estimated by
$\int_0^\frac{1}{2} \sigma ^{1-\beta} \diff \sigma \lesssim 1$ for $\beta + 2\delta - 2>0$, by $\int_0^\frac{1}{2} \log (\frac{1}{\sigma})\sigma ^{1-\beta} \diff \sigma \lesssim 1$ for $\beta + 2\delta - 2=0$  and if $\beta + 2 \delta - 2 < 0$ we have the upper bound
$\int_0^\frac{1}{2} \sigma ^{-1 +2\delta} \diff \sigma \lesssim 1$.\\
In the other case of $h+\sigma \geq 1$, we have
$(u_\varepsilon(h+\sigma) - u_\varepsilon(h))^2 \lesssim \max\{1, h^{2\beta-2\varepsilon-2} \} \sigma^2$ by Lemma \ref{lem:mvtforueps}, thus
\begin{align*}
\int\limits_{\frac{1}{2}}^1 \int\limits_{1-h}^h \!\frac{\left(u_\varepsilon(h+\sigma)-u_\varepsilon(h)\right)^2}{h^{1+\beta}\sigma^{1+\beta}} \diff \sigma \diff h \lesssim \int\limits_{\frac{1}{2}}^1 \frac{\max \{1,h^{2\beta - 2\varepsilon -2}\}}{h^{1+\beta}}\!\! \int\limits_{1-h}^h \frac{\sigma^2}{\sigma^{1+\beta}} \diff \sigma \diff h
\lesssim
\int\limits_{\frac{1}{2}}^1 \!h^{1-2\beta} {\max \{1,h^{2\beta - 2\varepsilon -2}\}} \diff h,
\end{align*}
which can be bounded from above by some constant independent of $\varepsilon$.\newline
\underline{$A_-$:} 
According to Lemma \ref{lem:mvtforueps} we can estimate the remainder by
\begin{align*}
\int\limits_0^1 \int\limits_0^h \frac{\left(u_\varepsilon(h) - u_\varepsilon(h-\sigma) \right)^2}{h^{1+\beta}\sigma^{1+\beta}} \diff \sigma \diff h \lesssim \int\limits_0^1 h^{\beta + 2\delta -3} \int\limits_0^h \sigma^{1-\beta} \diff \sigma \diff h \lesssim \int\limits_0^1 \frac{1}{h^{1-2\delta}} \diff h \lesssim 1.
\end{align*}
\underline{$B_+$:} We apply Lemma \ref{lem:mvtforueps} together with $h \leq h+\sigma \leq 2h$ to obtain 
\begin{align*}
\int\limits_1^\infty \int\limits_0^1 \frac{\left( u_\varepsilon(h+\sigma) - u_\varepsilon(h)\right)^2}{h^{1+\beta}\sigma^{1+\beta}}\diff \sigma \diff h \lesssim \int\limits_1^\infty \frac{h^{2\beta - 2\varepsilon - 2}}{h^{1+\beta}} \int\limits_0^1 \sigma^{1-\beta} \diff \sigma \lesssim \int\limits_1^\infty \frac{1}{h^{3-\beta+2\varepsilon}}\diff h \leq \frac{1}{2-\beta}.
\end{align*}
\underline{$B_-$:} This part requires the distinction between the cases when $h-\sigma \geq 1$ and $h-\sigma < 1$. First, we consider the situation where $h-\sigma < 1$ holds. Then we have $h \in [1,2]$, and
by Lemma \ref{lem:mvtforueps} we find
\begin{align*}
\int\limits_{h-1}^1 \frac{\left( u_\varepsilon(h)- u_\varepsilon(h - \sigma)\right)^2}{\sigma^{1+\beta}} \diff \sigma \lesssim \max\{1, h^{2\beta-2\varepsilon -2}\} \int\limits_{h-1}^1 \sigma^{1-\beta} \diff \sigma  \lesssim \max\{1, h^{2\beta-2\varepsilon -2} \}.
\end{align*}
If $2 \beta - 2\varepsilon-2 > 0$, we can calculate $\displaystyle\int_1^2 \frac{h^{2\beta- 2\varepsilon-2}}{h^{1+\beta}}\diff h \leq \frac{1}{2-\beta+2\varepsilon} \leq \frac{1}{2-\beta}$, while the case of $2 \beta - 2\varepsilon-2 \leq 0$ is clear.
Thus
\begin{align*}
\int\limits_1^2 \frac{1}{h^{1+\beta}}\int\limits_{h-1}^1 \frac{\left( u_\varepsilon(h)- u_\varepsilon(h - \sigma)\right)^2}{\sigma^{1+\beta}} \diff \sigma \diff h \lesssim 1.
\end{align*}
We will subdivide the case of $h-\sigma \geq 1$ in two integrals. Since 
$ \displaystyle  \int_0^{h-1} \frac{\sigma^2}{\sigma^{1+\beta}}\diff \sigma \lesssim (h-1)^{2-\beta}$ and $\displaystyle\int_0^1 \frac{\sigma^2}{\sigma^{1+\beta}} \diff \sigma \lesssim 1 ,
$
we can deduce from Lemma \ref{lem:mvtforueps} that
\begin{align*}
\int\limits_1^2\int\limits_0^{h-1} \frac{\left(u_\varepsilon(h)-u_\varepsilon(h-\sigma)\right)^2}{h^{1+\beta}\sigma^{1+\beta}}\diff \sigma \diff h  \lesssim 1 \quad  \text{ and }  \quad \int\limits_2^\infty\int\limits_0^{1} \frac{\left(u_\varepsilon(h)-u_\varepsilon(h-\sigma)\right)^2}{h^{1+\beta}\sigma^{1+\beta}}\diff \sigma \diff h  \lesssim 1.
\end{align*}
\underline{$C_+$:} Due to Lemma \ref{lem:mixedbrackets} we distinguish two cases. If $\beta > 1$ one has
$\left(u_\varepsilon(h+\sigma)-u_\varepsilon(h) - u_\varepsilon(\sigma)\right)^2 \lesssim {h^{2\beta-2\varepsilon-2}}\sigma^2$, whence
\[C_+ \lesssim 
 \int\limits_1^\infty \frac{h^{2\beta-2\varepsilon-2}}{h^{1+\beta}}\int\limits_1^h \frac{\sigma^2}{\sigma^{1+\beta}}\diff \sigma \diff h 
 \lesssim \int\limits_1^\infty \frac{1}{h^{3-\beta + 2 \varepsilon}} {\int\limits_1^h \sigma^{1-\beta} \diff \sigma}
\diff h \lesssim \int\limits_1^\infty \frac{1}{h^{1+2\varepsilon}} \diff h = \frac{1}{2\varepsilon}.
\]
If $\beta \leq 1$, we deduce
\[C_+ \lesssim \int\limits_1^\infty \frac{1}{h^{1+\beta}}\int\limits_1^h \frac{\sigma^{2\beta - 2\varepsilon}}{\sigma^{1+\beta}} \diff \sigma \diff h 
\lesssim \int\limits_1^\infty \frac{1}{h^{1+ \beta}} {\int\limits_1^h \sigma^{\beta - 2\varepsilon -1} \diff \sigma} \diff h \lesssim \int\limits_1^\infty \frac{1}{h^{1+2\varepsilon}} \diff h = \frac{1}{2\varepsilon}.
\]
\underline{$C_-$:} If $h-\sigma < 1$ we have the upper bound 
\begin{align*}
\int\limits_1^\infty \frac{1}{\sigma^{1+\beta}}\int\limits_\sigma^{\sigma + 1} \frac{1}{h^{1+\beta}} h^{2\beta - 2 \varepsilon} \diff h \diff \sigma \lesssim \int\limits_1^\infty \frac{\sigma^{\beta- 2 \varepsilon -1}}{\sigma^{1+\beta}} \diff \sigma = \int\limits_1^\infty \frac{1}{\sigma^{2+2\varepsilon}}\diff \sigma \leq \frac{1}{1+2\varepsilon} \leq 1,
\end{align*}
since  $\phi(h-\sigma)^2 + u_\varepsilon(\sigma)^2 \lesssim \sigma^{2\beta - 2 \varepsilon} \leq h^{2\beta - 2\varepsilon}=u_\varepsilon(h)^2$ holds.
If $h-\sigma \geq 1$ holds, we again employ Lemma \ref{lem:mixedbrackets} to deduce
$C_- \lesssim \frac{1}{2\varepsilon}$ analogously to the estimate of $C_+$.
We conclude $\Gamma_2 (u_\varepsilon)(0) \lesssim \frac{1}{\varepsilon}$, which shows the claim.
 \end{proof}

\subsection{Compactly Supported and Smooth Counterexamples}
The aim of this subsection is to give a counterexample from the class of smooth, compactly supported functions. 

\begin{mythm}\label{thm:cc}
Let $0 < \beta < 2$ and $d \in \N$. Then for all $\mu > 0$ there exists some $u \in \mathcal{C}_c^\infty(\R^d)$ satisfying 
\begin{equation}\label{eq:CDtestfunctions}
0 < \Gamma_2(u)(0) < \mu \left(\mathcal L { {(u)}}(0) \right)^2.
\end{equation}Moreover, $u(x) = 0$ for all $\lvert x \rvert \leq
\frac{1}{4}$.
\end{mythm}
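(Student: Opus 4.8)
The plan is to cut off the unbounded counterexamples from Theorem \ref{thm:MainTheoremExact} and Theorem \ref{thm:higherdimension}. Fix $\varepsilon>0$ in the range allowed there and small enough that $U_\varepsilon(x):=|x|^{\beta-\varepsilon}\eta(x)$ from Remark \ref{rem:cutoffzero}, resp.\ its lift $v_\varepsilon(x):=U_\varepsilon(x_1)$ in dimension $d$, satisfies $0<\Gamma_2(v_\varepsilon)(0)<\mu\,(\mathcal L(v_\varepsilon)(0))^2<\infty$; this is possible since those theorems give $\mathcal L(v_\varepsilon)(0)\ge C_0/\varepsilon$ and $0<\Gamma_2(v_\varepsilon)(0)\le C_1/\varepsilon$, hence $\Gamma_2(v_\varepsilon)(0)/(\mathcal L(v_\varepsilon)(0))^2\le C_1\varepsilon/C_0^2<\mu$ for $\varepsilon$ small. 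As $\eta$ vanishes on $\{|x_1|\le\tfrac14\}\supseteq B_{1/4}$, the function $v_\varepsilon$ is $C^\infty$, vanishes on $B_{1/4}$, and only fails to be compactly supported; so it suffices to approximate $v_\varepsilon$ by functions of $C^\infty_c$ which still vanish near $0$, with convergence of $\mathcal L(\cdot)(0)$ and $\Gamma_2(\cdot)(0)$.

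I would treat $d=1$ first. Take an even $\chi\in C^\infty_c(\R)$, $\chi\equiv1$ on $[-1,1]$, $\supp\chi\subseteq[-2,2]$, $0\le\chi\le1$, put $\chi_R(x):=\chi(x/R)$, $u_R:=U_\varepsilon\chi_R\in C^\infty_c(\R)$, $w_R:=U_\varepsilon(1-\chi_R)=U_\varepsilon-u_R$. Then $w_R$ is even, vanishes on $[-R,R]$, equals $|\cdot|^{\beta-\varepsilon}$ on $\{|\cdot|\ge2R\}$, and $1-\chi_R$ is bounded by $1$, $O(R^{-1})$-Lipschitz, with $|1-\chi_R(x)|\lesssim R^{-1}(|x|-R)_+$ for $|x|\le2R$. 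Since $f\mapsto\Gamma_2(f)(0)$ is the quadratic form obtained by polarising \eqref{eq:Gamma2} at $x=0$, and $u_R,w_R,U_\varepsilon$ all have finite $\Gamma_2$ at $0$, expanding $\Gamma_2(U_\varepsilon)(0)=\Gamma_2(u_R)(0)+2\langle u_R,w_R\rangle+\Gamma_2(w_R)(0)$ and applying Cauchy--Schwarz to the cross term reduces the problem to showing $\mathcal L(w_R)(0)\to0$ and $\Gamma_2(w_R)(0)\to0$ as $R\to\infty$ (then $\mathcal L(u_R)(0)\to\mathcal L(U_\varepsilon)(0)$ and $\Gamma_2(u_R)(0)\to\Gamma_2(U_\varepsilon)(0)$, so \eqref{eq:CDtestfunctions} holds for $u_R$ once $R$ is large). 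The first limit is trivial: by evenness $|\mathcal L(w_R)(0)|\le2c_{\beta,1}\int_R^\infty h^{-1-\varepsilon}\,\diff h=\tfrac{2c_{\beta,1}}{\varepsilon}R^{-\varepsilon}$.

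The technical core is $\Gamma_2(w_R)(0)\to0$. By Lemma \ref{lem:simplified} this is a double integral over $\{0\le\sigma\le h\}$ of $[w_R(h+\sigma)-w_R(h)-w_R(\sigma)]^2+[w_R(h-\sigma)-w_R(h)-w_R(\sigma)]^2$ against $h^{-1-\beta}\sigma^{-1-\beta}$. I would split $\{0\le\sigma\le h\}$ into finitely many regions according to the positions of $\sigma,h,h+\sigma,h-\sigma$ relative to the scales $R$ and $2R$ (below $R$ the relevant factor of $w_R$ is zero, above $2R$ one has $w_R=|\cdot|^{\beta-\varepsilon}$). On the ``far'' regions, where every argument of $w_R$ that occurs is $\ge2R$, $w_R$ coincides with $|\cdot|^{\beta-\varepsilon}$ and one reruns the computations behind Lemma \ref{lem:mvtforueps} and Lemma \ref{lem:mixedbrackets} (via $\big(|h\pm\sigma|^{\gamma}-|h|^{\gamma}-|\sigma|^{\gamma}\big)^2=h^{2\gamma}f(\sigma/h)^2$, $\gamma=\beta-\varepsilon$, and the bounds on $f,g$ there), the only change being that the $h$- and $\sigma$-integrations start at $R$ rather than at $1$, which gains an extra factor $\lesssim R^{-2\varepsilon}$. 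On the ``transition'' regions, where some argument lies in $[R,2R]$, I would combine the mean value theorem for $|\cdot|^{\beta-\varepsilon}$ with the properties of $1-\chi_R$ just recorded to recover the needed cancellation, each region again contributing $\lesssim R^{-2\varepsilon}$. Summing, $\Gamma_2(w_R)(0)\lesssim R^{-2\varepsilon}\to0$, which is Theorem \ref{thm:cc} for $d=1$. I expect this region-by-region bookkeeping to be the main difficulty: the cheap bound $[w_R(h\pm\sigma)-w_R(h)-w_R(\sigma)]^2\lesssim w_R(\sigma)^2+(w_R(h\pm\sigma)-w_R(h))^2$ leads to integrals controlled by $\Gamma(U_\varepsilon)(0)$, which is infinite exactly because $\varepsilon<\beta/2$; so the cancellation in the second difference must be preserved throughout, as in the $A_\pm,B_\pm,C_\pm$ split of Theorem \ref{thm:MainTheoremExact}, with the cutoff adding further cases.

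For $d\ge2$ I would not cut off in all variables simultaneously. Fix $R$ large as above; by Lemma \ref{lemma:Reduction} the function $\tilde u(x):=u_R(x_1)$ already satisfies $0<\Gamma^{(d)}_2(\tilde u)(0)<\mu\,(\mathcal L^{(d)}(\tilde u)(0))^2$ and vanishes on $B_{1/4}$, and it is $C^\infty$ with bounded derivatives but constant in $x_2,\dots,x_d$. Write $x=(x_1,x')$ with $x'\in\R^{d-1}$, pick $\psi\in C^\infty_c(\R^{d-1})$ with $\psi\equiv1$ near $0$, set $\psi_S(x'):=\psi(x'/S)$, $\hat u:=\tilde u\,\psi_S\in C^\infty_c(\R^d)$, $\phi_S:=\hat u-\tilde u=\tilde u(\psi_S-1)$. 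Then $\|\phi_S\|_{C^2}$ is bounded uniformly in $S\ge1$ and $\phi_S$ vanishes on $\{|x'|<S\}$, so in particular near $0$; hence its second difference $\phi_S(h+\sigma)-\phi_S(h)-\phi_S(\sigma)+\phi_S(0)$ vanishes unless one of $|h'|,|\sigma'|,|h'+\sigma'|$ is $\ge S$, which forces $\max(|h|,|\sigma|)\ge S/2$. Using $(\phi_S(h+\sigma)-\phi_S(h)-\phi_S(\sigma)+\phi_S(0))^2\lesssim\min(|h|^2,1)\min(|\sigma|^2,1)$ (from the uniform $C^1$- and $C^2$-bounds), $\int\min(|h|^2,1)|h|^{-d-\beta}\,\diff h<\infty$ (as $\beta<2$), and $\int_{|h'|\ge S}|h|^{-d-\beta}\,\diff h\lesssim S^{-\beta}$, one obtains $\Gamma^{(d)}_2(\phi_S)(0)\lesssim S^{-\beta}$ and $|\mathcal L^{(d)}(\phi_S)(0)|\lesssim S^{-\beta}$. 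The quadratic-form identity as before then yields $\Gamma^{(d)}_2(\hat u)(0)\to\Gamma^{(d)}_2(\tilde u)(0)$ and $\mathcal L^{(d)}(\hat u)(0)\to\mathcal L^{(d)}(\tilde u)(0)$ as $S\to\infty$, so $\hat u$ satisfies \eqref{eq:CDtestfunctions} and vanishes on $B_{1/4}$ for $S$ large, which completes the proof.
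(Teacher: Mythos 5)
Your proposal is correct, but it takes a genuinely different route from the paper in the key approximation step. The paper multiplies $u_\varepsilon$ by a slowly varying cutoff $\eta_N$ whose transition zone is $[N,N^2]$ with $\lvert\eta_N'\rvert\lesssim N^{-2}$, introduces the truncated operator $\Gamma_2^M$, and proves $\Gamma_2(v_N)(0)\to\Gamma_2(u_\varepsilon)(0)$ by an elaborate splitting of the domain (regions I, II and A--H), which also forces extra smallness conditions on $\varepsilon$ (e.g. $4\varepsilon<\beta-\tfrac12$ when $\beta>\tfrac12$); in $d\ge 2$ it cuts off in all variables and uses dominated convergence with uniform $W^{1,\infty}$ bounds. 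You instead truncate at a single scale with an ordinary cutoff $\chi_R$ and control the remainder $w_R=U_\varepsilon(1-\chi_R)$ directly, exploiting that $f\mapsto\Gamma_2(f)(0)$ is a quadratic form so that Cauchy--Schwarz reduces everything to $\Gamma_2(w_R)(0)\to 0$, and in $d\ge2$ you cut off only in the transverse variables with quantitative $O(S^{-\beta})$ bounds; both of these reductions are sound. The one step you leave as ``region-by-region bookkeeping'' is indeed true, but you can bypass it entirely by a scaling observation you did not exploit: for $R\ge1$ one has $w_R(Rx)=R^{\beta-\varepsilon}w_1(x)$, so the substitution $h=Rh'$, $\sigma=R\sigma'$ in \eqref{eq:Gamma2} gives exactly $\Gamma_2(w_R)(0)=R^{-2\varepsilon}\Gamma_2(w_1)(0)$, and $\Gamma_2(w_1)(0)<\infty$ follows from Theorem \ref{thm:MainTheoremExact} applied to the rescaled function $2^{-(\beta-\varepsilon)}w_1(2\,\cdot)$ (which satisfies its hypotheses with, say, $\delta=1$) together with the scaling of $\Gamma_2$ as in Proposition \ref{prop:NoCurvature}; this makes your route complete and noticeably shorter than the paper's eight-case analysis, at the price of not producing the explicit rates the paper gets. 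One small inaccuracy: your remark that the crude bound $w_R(\sigma)^2+(w_R(h\pm\sigma)-w_R(h))^2$ fails because $\Gamma(U_\varepsilon)(0)=\infty$ is too pessimistic --- in the double integral the additional $h$-integration over $h\ge\sigma$ restores integrability of the $w_R(\sigma)^2$ term (this is exactly the paper's estimate in region II), and the genuinely delicate contributions are only those of the `$h-\sigma$' differences, as in the paper's regions $B_-$, $C_-$ and E.
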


\begin{proof}
{ 
Let us first show how to reduce Theorem \ref{thm:cc} to the case of $d=1$.
Assume that for given $\mu > 0$ there exists a $v \in \mathcal{C}_c^\infty(\R)$ with $v(x_1) = 0$ for all $|x_1|\leq \frac{1}{4}$ and $0 < \Gamma_2(v)(0) < \frac{\mu}{4}\left(\mathcal L { {(v)}}(0) \right)^2$. We will now show \eqref{eq:CDtestfunctions} for some $u \in \mathcal{C}_c^\infty(\R^d)$ satisfying $u(x) = 0$ for all $\lvert x \rvert \leq
\frac{1}{4}$.\\
We set $w(x_1, \ldots, x_d) := v(x_1)$, then $w \in \mathcal{C}^\infty(\R^d) \cap W^{1, \infty}(\R^d)$. Furthermore, we define $u_n(x) :=  \chi_n(x) w(x) \in \mathcal{C}_c^\infty(\R^d)$, where $\chi_n \in \mathcal{C}_c^\infty(\R^d)$ is a 
cutoff function satisfying $0 \leq  \chi_n \leq 1$, $\chi_n \equiv 1$ on $B_n(0)$, $\chi_n \equiv 0$ on $\R^d \setminus B_{n+1}(0)$ and $\lvert \nabla \chi_n\rvert_\infty \leq C$ for all $n \in \N$.
Note that $u_n(x) = 0$ for all $|x| \leq \frac{1}{4}$ and $(u_n)_n$ is uniformly bounded in $W^{1,\infty}(\R^d)$ by, say, $M>0$. We will now show that 
\begin{equation}
 \label{eq:limitCutoff}
  \mathcal{L}(u_n)(0) \to \mathcal{L}(w)(0)\quad \text{ and }\quad\Gamma_2(u_n)(0) \to \Gamma_2(w)(0)\quad \text{ as }n\to \infty
\end{equation}
using Lebesgue's theorem. Clearly $u_n \to w$ pointwise. For $|h|\leq \frac{1}{4}$ we have $u_n(h) 
= 0$, and for $|h|\geq \frac{1}{4}$ we can bound $\frac{|u_n(h) + u_n(-h)|}{|h|^{d+\beta}} \leq \frac{2 |u_n|_{\infty} }{|h|^{d+\beta}}\leq \frac{2 M}{|h|^{d+\beta}}$, which is integrable on $\R^d\setminus B_{\frac{1}{4}}(0)$. The first part of \eqref{eq:limitCutoff} follows. For the latter we define $ g_n(h,\sigma) := \frac{[u_n(h+\sigma) - u_n(h) - u_n(\sigma)]^2}{|h|^{d+\beta}|\sigma|^{d+\beta}}$ and $B := B_{\frac{1}{8}}(0) \subset \R^d$, then 
\begin{equation*} 
 c_{\beta,d}^{-2}\Gamma_2(u_n)(x)
  = \int_{\R^d\setminus B}\int_{\R^d\setminus B} g_n(h,\sigma) \diff \sigma \diff h
   + 2 \int_{\R^d\setminus B}\int_{B} g_n(h,\sigma) \diff \sigma \diff h.
\end{equation*}
In the first integral 
we estimate $g_n$ by the integrable function $9 M ^2 |h|^{-d-\beta}|\sigma|^{-d-\beta}$. In the second integral we use $u_n(\sigma)=0$ and apply the mean value theorem to obtain the upper bound
 $g_n(h,\sigma) \leq  |\nabla u_n|_\infty^2 \sigma^{-d +2 -\beta} h^{-d-\beta}\leq  M^2 \sigma^{-d +2 -\beta} h^{-d-\beta}$, which is integrable on $(\R^d \setminus B) \times B$ since $\beta <2$. Another application of Lebesgue's theorem yields the second part of \eqref{eq:limitCutoff}.
 \\
We can now infer the claim for $d>1$ using \eqref{eq:limitCutoff}.
Pick $n$ large enough such that $u_n \in \mathcal{C}^\infty_c(\R ^d)$ satisfies
$|(\mathcal{L}(u_n)(0))^2 - (\mathcal{L}(w)(0))^2| \leq \frac{1}{2} (\mathcal{L}(w)(0))^2$ and 
$|\Gamma_2(u_n)(0) - \Gamma_2(w)(0)| \leq \Gamma_2(w)(0)$.
Then $(\mathcal{L} (w)(0))^2 \leq 2 (\mathcal{L}(u_n)(0))^2$ and whence, using Lemma \ref{lemma:Reduction} and our choice of $v$,
\[
0<\Gamma_2(u_n)(0) \leq 2 \Gamma_2(w)(0)  =2 A_{d,\beta}^2 \Gamma_2(v)(0) < 2 A_{d,\beta}^2\frac{\mu}{4} (\mathcal{L} (v)(0))^2  = \frac{\mu}{2} (\mathcal{L} (w)(0))^2
\leq \mu (\mathcal{L} (u_n)(0))^2.
\]

In the remainder we prove the claim for $d=1$.} Let $u_\varepsilon$ be given as in Theorem \ref{thm:MainTheoremExact} with a smooth cutoff at zero as explained in Remark \ref{rem:cutoffzero}, and define $v_{N,\varepsilon}$ by $v_{N,\varepsilon} := u_\varepsilon \eta_N$, where $\eta_N$ is a symmetric and smooth cutoff function satisfying $\eta_N \equiv 1$ on $[0,N]$, $0 < \eta_N < 1$ on $(N,N^2)$, $\eta_N \equiv 0$ on $[N^2,\infty)$ and a decay behaviour of $\vert \eta_N' \vert \lesssim N^{-2}$. Note that we use here and throughout this proof the symbol `$\lesssim$' whenever the respective constant is independent of $N$. Moreover, $N>0$ is not related to the dimension in the CD-inequality in this subsection.\\
We have $v_{N,\varepsilon} \in \mathcal{C}^\infty_c(\R)$ with the desired vanishing near zero, and $v_{N,\varepsilon}'(x)=0$ for $x \geq N^2$ and
\begin{equation}\label{eq:derivativevN}
\vert v_{N,\varepsilon}'(x) \vert \leq (\beta - \varepsilon)\eta_N(x) x^{\beta
 - \varepsilon -1} + 
 \vert \eta_N'(x) \vert
 x^{\beta - \varepsilon} \lesssim x^{\beta - \varepsilon -1}, \quad x \in [1,N^2].
\end{equation}
Let $\beta \in(0,2)$ be fixed and let $0 < \varepsilon < \varepsilon_0$, where $0<4\varepsilon < \beta$ and additionally 
$4\varepsilon_0 < \beta -1$ (if $\beta > 1$) and $4\varepsilon_0 <\beta - \frac{1}{2}$ if $\beta > \frac{1}{2}$.
In the sequel we will denote $u_\varepsilon$ by $u$ and $v_{N,\varepsilon}$ by $v_N$.\\
First we obtain that 
\begin{align*}
| \mathcal{L}(u)(0) - \mathcal{L}(v_N)(0) | &\lesssim \left( \int\limits_N^{N^2} \frac{| u (h) - \eta_N(h) u(h)|}{h^{1+\beta}}\diff h + \int\limits_{N^2}^\infty \frac{u(h)}{h^{1+\beta}}\diff h \right) 
\lesssim \int\limits_N^\infty \frac{u(h)}{h^{1+\beta}} \diff h \to 0
\end{align*}
as $N \to \infty$. 
If also $\Gamma_2(v_N)(0) \to \Gamma_2 (u)(0)$ as $N \to \infty$ then the claim can be shown as { above, now using Theorem \ref{thm:MainTheoremExact}}. In the remainder of this proof we will thus show that $\Gamma_2(v_N)(0) \to \Gamma_2(u)(0)$.\\
First, we define for $M>0$ the kernel  $k_M(x) := \frac{1}{\lvert x \rvert^{1+\beta}}\mathds{1}_{[-M,M]}(x)$. We denote by $\mathcal{L}^M$ the corresponding operator
\begin{equation*}
\mathcal{L}^M u(x):= c_{\beta,1}\int_{[-M,M]} \frac{u(x+h) - 2u(x) +u(x-h)}{|h|^{d+\beta} } \diff h,                                                                                                             
\end{equation*} 
and by 
\[\Gamma_2^Mu(x) = c_{\beta,1}^2 \int_{[-M,M]}\int_{[-M,M]} \frac{[ u(x+h+\sigma) - u(x+h) -u(x+\sigma) +u(x)]^2}{|h|^{1+\beta} |\sigma|^{1+\beta} } \diff h \diff  \sigma
\]
 the corresponding iterated carr\'e du champ operator. 
Applying the theorem of dominated convergence one finds that $\Gamma_2^M (u)(0)$ converges to $\Gamma_2 (u)(0)$ as $M$ tends to infinity. Let $\rho > 0$ and fix $M>0$ such that 
\begin{equation*}
|\Gamma_2^M (u) (0) - \Gamma_2 (u) (0) | < \rho.
\end{equation*}
If $N > 2M$ and $|h|, |\sigma| \leq M$, then $( v_N(h+\sigma)-v_N(h) - v_N(\sigma))^2 = ( u(h+\sigma) - u(h) - u(\sigma))^2$ and thus $|\Gamma_2^M (v_N)(0) - \Gamma_2^M (u)(0)| = 0$. 
 This observation together with
\begin{align*}
| \Gamma_2 (v_N)(0) - \Gamma_2 (u)(0)| \leq | \Gamma_2 (v_N)(0) - \Gamma_2^M (v_N)(0)| + | \Gamma_2^M (v_N)(0) - \Gamma_2^M (u)(0) | + | \Gamma_2^M (u)(0) - \Gamma_2 (u) (0) |
\end{align*}
shows that it suffices to prove that $| \Gamma_2 (v_N)(0) - \Gamma_2^M (v_N)(0)|$ converges to zero as $N$ tends to infinity.
By symmetry (cf. Lemma \ref{lem:simplified}) we have  
\begin{equation}
 \label{eq:Gamma2triangle}
 \begin{split}
  &
  |\Gamma_2 (v_N)(0) - \Gamma_2^M (v_N)(0)| \\
& \quad \lesssim \int\limits_{M}^\infty \int\limits_0^h \frac{( v_N(h+\sigma) - v_N(h)-v_N(\sigma))^2 + ( v_N(h-\sigma) - v_N(h)-v_N(\sigma))^2}{h^{1+\beta}\sigma^{1+\beta}} \diff \sigma \diff h.
 \end{split}
\end{equation}
In order to show that this integral converges to zero we distinguish several cases (see Figures \ref{fig:domain1} and \ref{fig:domain2}). Starting with the `$h+\sigma$'-term we have the following: 

\begin{figure}[ht]
 \centering
 \begin{tikzpicture}[scale = 1]
 \draw (0,0) -- (7,7);
  \draw[->,thick] (0,-0.3) -- (0,8.4);
 \draw[->, thick] (-0.3,0) -- (11.4,0);
 \draw[densely dotted] (9,7) -- (9,0)-- (4.5,4.5);
 \draw[densely dotted] (3,0)-- (3,3);
 \draw
 (3,0)-- (1.5,1.5);
 \draw
 (1,0) -- (1,1);
 \node (j) at (-0.2, 6) {$\sigma$};
 \node (l) at (5, -0.2) {$h$};
 \node (n2c) at (1,-0.2) {M};
 \node (n3) at (3,-0.2) {N};
 \node (n4) at (9,-0.2) {N$^2$};
 \node (eins) at (1.5,0.5) {I};
 \node (zwei) at (5,2) {II};
   \node  at (-0.3,1) {M};
   \node  at (0,1) {-};
   \node  at (-0.3,3) {N};
    \node  at (0,3) {-};
\end{tikzpicture}
 \caption{Splitting of the domain of integration for the term with $h+\sigma$.}
  \label{fig:domain1}
\end{figure}

\underline{I: $0 \leq \sigma \leq h$, $h+\sigma \leq N$, $h \geq M$:} Clearly, since $h+\sigma \leq N$ one finds $( v_N(h+\sigma)-v_N(h) - v_N(\sigma) )^2 = ( u(h+\sigma) - u(h) - u(\sigma) )^2$ and thus we have the upper bound
\begin{equation*}
\int\limits_M^N \int\limits_0^{N-h} \frac{\left( v_N(h+\sigma) - v_N(h)-v_N(\sigma)\right)^2}{h^{1+\beta}\sigma^{1+\beta}} \diff \sigma \diff h \lesssim |\Gamma_2^M (u) (0) - \Gamma_2 (u) (0) | < \rho.
\end{equation*}

\underline{II: $0 \leq \sigma \leq h, h+\sigma \geq N$} As in the proof of Theorem \ref{thm:MainTheoremExact} we will make use of the basic estimate 
\begin{align*}
\left(v_N(h\pm\sigma)-v_N(h)-v_N(\sigma)\right)^2 \lesssim v_N(\sigma)^2 + \left( v_N(h\pm\sigma) - v_N(h)\right)^2
\end{align*} 
and note that for the $v_N(\sigma)$-term we have
\begin{align*}
\int\limits_\frac{N}{2}^\infty \frac{1}{h^{1+\beta}} \int\limits_0^1 \frac{v_N(\sigma)^2}{\sigma^{1+\beta}} \diff \sigma \diff h \lesssim \int\limits_\frac{N}{2}^\infty \frac{1}{h^{1+\beta}} \int\limits_0^1 \frac{\sigma^{2\beta + 2\delta}}{\sigma^{1+\beta}} \diff \sigma \diff h \lesssim \int\limits_\frac{N}{2}^\infty \frac{1}{h^{1+\beta}} \diff h \lesssim \frac{1}{N^\beta} 
\end{align*}
and 
\begin{align*}
\int\limits_\frac{N}{2}^\infty \frac{1}{h^{1+\beta}} \int\limits_1^h \frac{v_N(\sigma)^2}{\sigma^{1+\beta}} \diff \sigma \diff h \leq \int\limits_\frac{N}{2}^\infty \frac{1}{h^{1+\beta}} \int\limits_1^h \frac{\sigma^{2\beta - 2\varepsilon}}{\sigma^{1+\beta}} \diff \sigma \diff h &\lesssim  \int\limits_\frac{N}{2}^\infty \frac{1}{h^{1+2\varepsilon}} \diff h 
\lesssim \frac{1}{N^{2\varepsilon}}.
\end{align*}
These estimates will be applied also to the term involving `$h-\sigma$' later. 
From the mean value theorem we obtain $\left( v_N(h + \sigma ) - v_N(h) \right)^2 = v_N'(\xi)^2 \sigma^2 $ for some $h \leq \xi \leq h+\sigma \leq 2 h$. This yields by \eqref{eq:derivativevN} the upper bound
\begin{align*}
\int\limits_{\frac{N}{2}}^\infty \frac{1}{h^{1+\beta}} \int\limits_0^h \frac{\left( v_N(h + \sigma ) - v_N(h) \right)^2}{\sigma^{1+\beta}} \diff \sigma \diff h \lesssim \int\limits_{\frac{N}{2}}^\infty \frac{h^{2\beta - 2\varepsilon - 2}}{h^{1+\beta}} 
\int\limits_0^h \sigma^{1-\beta} \diff \sigma \diff h
&\lesssim \int\limits_{\frac{N}{2}}^\infty \frac{1}{h^{1 + 2\varepsilon}} \diff h \lesssim \frac{1}{N^{2 \varepsilon}}
\end{align*}
which tends to zero as $N \to \infty$. This shows that the first part of \eqref{eq:Gamma2triangle} tends to zero.\\
For the estimate of the second part we need a refined splitting of the domain, see Figure \ref{fig:domain2}. Note that we may omit the integrals containing $v_N(\sigma)^2$ by the previous calculation.

\begin{figure}[ht]
 \centering
\begin{tikzpicture}[scale = 0.55]
\footnotesize
 \draw (0,0) -- (12,12);
\draw[->,thick] (0,-0.3) -- (0,12.4);
 \draw[->, thick] (-0.3,0) -- (22.4,0);
 \draw[densely dotted] (0.5,0) -- (3,2.5);
 \draw
 (1,0) -- (1,1);
 \draw[densely dotted] (0.5,0.5)--(14,0.5);
 \draw[densely dotted] (9,0) -- (12,3);
 \draw[densely dotted] (3,0) -- (3.5,0.5);
   \draw[densely dotted] (3,3) -- (3.5,3);
   
 \draw (3.5,0) -- (3.5, 3);
 \draw (3,0)-- (3,3);
\draw (3,2.5) -- (12.5,12);
 \draw (3.5, 0.5) -- (6,3);
 \draw (3.5,3) -- (21,3);
 \draw (12,3) -- (18,9);
 \draw (9.5,9) -- (21,9);

 \node at (1.9,0.8) {\scriptsize A};
 \node (B) at (4.5,6) {\scriptsize B};
 \draw[->] (B) to [bend right](4.8,4.5); 
 \node at (3.25,2) {\scriptsize C};
 \node at (4.1,2){\scriptsize D};
 \node at (10,6) {\scriptsize E};
 \node at (19,6) {\scriptsize F};
 \node at (16,11) {\scriptsize G};
 \node at (12,1.8){\scriptsize H};

 \node  at (-0.4, 6) {$\sigma$};
 \node  at (14, -0.4) {$h$};
 \node  at (0.5,-0.4) {1};
 \node  at (1,-0.4) {M};
 \node  at (3,-0.4) {N};
 \node  at (9,-0.4) {N$^2$};
  \node (n3c) at (-0.5,9) {N$^2$};
   \node (n4c) at (-0.5,3) {N};
   \node (n3c) at (0,9) {-};
   \node (n4c) at (0,3) {-};
      \node  at (-0.5,0.5) {1};
   \node at (0,0.5) {-};
\end{tikzpicture}
 \caption{Splitting of the domain of integration for the term with $h-\sigma$.}\label{fig:domain2}
\end{figure}
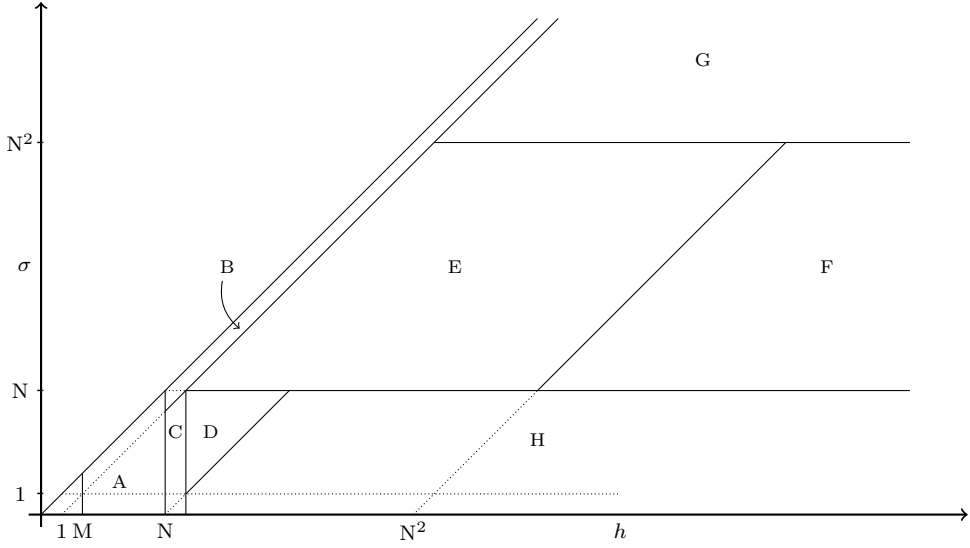
\underline{A: $0 \leq \sigma \leq h \leq N$, $h \geq M$:} 
Since $h - \sigma \leq N$ it follows similar to (I) that
\begin{equation*}
\int\limits_M^N \int\limits_{0}^h \frac{\left( v_N(h-\sigma) - v_N(h)-v_N(\sigma)\right)^2}{h^{1+\beta}\sigma^{1+\beta}} \diff \sigma \diff h < {\rho}.
\end{equation*}

\underline{B: $h-1 \leq \sigma \leq h$, $h \geq N$:} We can always estimate 
\begin{equation}
 \label{eq:estimateallvyu} 
\left( v_N(h-\sigma) - v_N(h)-v_N(\sigma)\right)^2 \leq 9 u(h)^2. 
\end{equation}
Hence, we deduce from $(h-1)^{-1-\beta} \lesssim h^{-1-\beta}$ that
\begin{align*}
&\int\limits_N^\infty \int\limits_{h-1}^h\frac{\left( v_N(h-\sigma) - v_N(h)-v_N(\sigma)\right)^2}{h^{1+\beta}\sigma^{1+\beta}} \diff \sigma \diff h \lesssim \int\limits_N^\infty \frac{h^{2\beta - 2\varepsilon}}{h^{1 + \beta}} \int\limits_{h-1}^h \frac{1}{\sigma^{1+\beta}} \diff \sigma
\\ & \qquad \lesssim  \int\limits_{N}^\infty \frac{h^{2\beta - 2\varepsilon}}{h^{2+2\beta}} \diff h\lesssim \frac{1}{N^{1+2\varepsilon}}.
\end{align*}
\underline{C, D, E, F, G, H: $0 \leq \sigma \leq h-1$, $h \geq N$, where $\beta > \frac{1}{2}$:}
The mean value theorem implies the existence of some $\xi$ with $1 \leq h-\sigma \leq \xi \leq h$ such that
$\left(v_N(h)-v_N(h-\sigma)\right)^2 = v'_N(\xi)^2\sigma^2$. 
We consider the case of $\beta >1$ first. Here we conclude 
$v_N'(\xi)^2 \lesssim h^{2\beta - 2\varepsilon - 2}$ from \eqref{eq:derivativevN}. Thus, we can estimate
\begin{align*}
\int\limits_N^\infty \frac{1}{h^{1+\beta}}\int\limits_0^{h-1} \frac{\left(v_N(h)-v_N(h-\sigma)\right)^2}{\sigma^{1+\beta}} \diff \sigma \diff h &\lesssim  \int\limits_N^\infty \frac{h^{2\beta - 2\varepsilon -2}}{h^{1+\beta}} \int_0^{h-1} \sigma^{1-\beta} \diff \sigma \diff h 
\lesssim \int\limits_N^\infty \frac{1}{h^{1+2\varepsilon}} \diff h
\lesssim \frac{1}{N^{2\varepsilon}},
\end{align*}
establishing the claim for $\beta > 1$. In the case of $\frac{1}{2}  < \beta \leq 1$ we obtain from $v_N'(\xi)^2 \lesssim (h-\sigma)^{2\beta - 2\varepsilon - 2}$ 
and $2 \beta - 2 \varepsilon -1 > 0$ the estimate
\begin{align*}
&\int\limits_N^\infty \frac{1}{h^{1+\beta}}\int\limits_0^{h-1} \frac{\left(v_N(h)-v_N(h-\sigma)\right)^2}{\sigma^{1+\beta}} \diff \sigma \diff h \lesssim \int\limits_N^\infty \frac{h^{1-\beta}}{h^{1+\beta}} \int\limits_0^{h-1} (h-\sigma)^{2\beta - 2\varepsilon -2} \diff \sigma \diff h \\ 
& \qquad=\int\limits_N^\infty \frac{1}{h^{2\beta}} \int\limits_1^h y^{2\beta - 2\varepsilon -2} \diff y \diff h \lesssim \int\limits_N^\infty \frac{1}{h^{1+2\varepsilon}} \diff h \lesssim \frac{1}{N^{2\varepsilon}}.
\end{align*}
Thus, the claim is established for any $\beta > \frac{1}{2}$. From now on we assume $\beta \leq \frac{1}{2}$.\\
\underline{C: $0 \leq \sigma \leq h-1$, $N \leq h \leq N+1$, where $\beta \leq \frac{1}{2}$:} Arguing as before we find \begin{align*}
\int\limits_N^{N+1} \frac{1}{h^{1+\beta}}\int\limits_0^{h-1} \frac{\left(v_N(h)-v_N(h-\sigma)\right)^2}{\sigma^{1+\beta}} \diff \sigma \diff h
\lesssim  \int\limits_N^{N+1} \frac{1}{h^{2\beta}} 
\int\limits_1^h y^{2\beta - 2\varepsilon -2} \diff y
\diff h \lesssim \int\limits_N^{N+1} \frac{1}{h^{2\beta}} \diff h \lesssim 
\frac{1}{N^{2\beta}}
\end{align*}
since $\int_1^h y^{2\beta - 2\varepsilon -2} \diff y$ is bounded, thus this term converges to zero.\\
\underline{D: $N+1 \leq h \leq 2N$, $\sigma \leq N$, $h-\sigma \leq N$, where $\beta \leq \frac{1}{2}$:}
Using \eqref{eq:estimateallvyu} we obtain the upper bound
\begin{align*}
\int\limits_{N+1}^{2N} \frac{u(h)^2}{h^{1+\beta}} \int\limits_{h-N}^{N} \frac{1}{\sigma^{1+\beta}} \diff \sigma \diff h &\lesssim N^{2\beta-2\varepsilon} \int\limits_{N+1}^{2N} \frac{(h-N)^{-\beta}}{h^{1+ \beta}} \diff h \leq \frac{N^{2\beta - 2\varepsilon}}{N^{1+ \beta}} \int\limits_{N+1}^{2N} (h-N)^{-\beta} \diff h
\\&= \frac{1}{N^{1+2\varepsilon -\beta}} \int\limits_1^N \frac{1}{y^\beta} \diff y  \lesssim \frac{1}{N^{2\varepsilon}}.
\end{align*}
\underline{E: $N \leq \sigma \leq N^2$, $1 \leq h-\sigma \leq N^2$, where $\beta \leq \frac{1}{2}$:} In order to estimate the integral
\begin{equation}\label{eq:hardestcase}
\int\limits_N^{N^2} \frac{1}{\sigma^{1+\beta}} \int\limits_{1+\sigma}^{N^2+\sigma} \frac{\left(v_N(h-\sigma) - v_N(h)-v_N(\sigma)\right)^2}{h^{1+\beta}}\diff h\diff \sigma
\end{equation}
we rewrite
\begin{align*}
&\left(v_N(h-\sigma)-v_N(h)-v_N(\sigma)\right)^2\\
& \quad = h^{2\beta - 2\varepsilon} \left[ \eta_N(h-\sigma) \left( 1 + \left( \frac{\sigma}{h}\right)^{\beta - \varepsilon} - \left( 1- \frac{\sigma}{h}\right)^{\beta - \varepsilon} \right)\right.  \\ 
&\qquad \quad  +  \left.\eta_N(h) -\eta_N(h-\sigma) + \left(\eta_N (\sigma) - \eta_N(h-\sigma)\right)\left(\frac{\sigma}{h}\right)^{\beta - \varepsilon} \right]^2 \\
&\quad \lesssim  h^{2\beta - 2\varepsilon} \eta_N(h-\sigma)^2 \left( 1 + \left( \frac{\sigma}{h}\right)^{\beta - \varepsilon} - \left( 1- \frac{\sigma}{h}\right)^{\beta - \varepsilon} \right)^2 \\
&\qquad \quad + h^{2\beta - 2\varepsilon} \left( \eta_N(h) - \eta_N(h-\sigma)\right)^2 + h^{2\beta - 2\varepsilon} \left( \eta_N(\sigma)- \eta_N(h - \sigma) \right)^2 \left( \frac{\sigma}{h}\right)^{2\beta - 2\varepsilon}.
\end{align*}
Now we want to find suitable estimates for these three terms. First, we obtain from Lemma \ref{lem:mixedbrackets}, with $\beta \leq \frac{1}{2}$, that $ h^{2\beta - 2\varepsilon} {\eta_N(h-\sigma)^2} \left( 1 + \left( \frac{\sigma}{h}\right)^{\beta - \varepsilon} - \left( 1- \frac{\sigma}{h}\right)^{\beta - \varepsilon} \right)^2 \lesssim \sigma^{2\beta - 2\varepsilon} = u(\sigma)^2$, and therefore we can control this term by the previous calculations in (II). The same holds true for the third term, as $h^{2\beta - 2\varepsilon} {\left( \eta_N(\sigma)- \eta_N(h - \sigma) \right)^2} \left( \frac{\sigma}{h}\right)^{2\beta - 2\varepsilon} \leq \sigma^{2\beta - 2\varepsilon}$. The remaining term can be estimated using the mean value theorem and our assumptions on $\eta_N$ by $h^{2\beta - 2\varepsilon} \left( \eta_N(h) - \eta_N(h-\sigma)\right)^2 \lesssim \frac{h^{2\beta - 2\varepsilon}\sigma^2}{N^4}$. Finally, we obtain 
\begin{align*}
\frac{1}{N^4}\int\limits_N^{N^2} \frac{1}{\sigma^{1+\beta}} \int\limits_{1+\sigma}^{N^2+\sigma} \frac{h^{2\beta - 2\varepsilon}\sigma^2}{h^{1+\beta}}\diff h\diff \sigma 
&\lesssim \frac{N^{2\beta - 4\varepsilon}}{N^4} \int\limits_N^{N^2} \sigma^{1-\beta} \diff \sigma \lesssim \frac{1}{N^{4\varepsilon}}
\end{align*}
and conclude that \eqref{eq:hardestcase} converges to $0$ as $N$ tends to infinity.\\
\underline{F: $N \leq \sigma \leq N^2$, $N^2 \leq h-\sigma$, where $\beta \leq \frac{1}{2}$:}  Here, $v_N(h-\sigma) = v_N(h) = 0$ and hence this integral converges to zero by the previous calculations in (II).
\\\underline{G: $N^2 \leq \sigma \leq h-1$, where $\beta \leq \frac{1}{2}$:}
Since $v_N(x) \leq N^{2\beta - 2\varepsilon}$ for all $x$ we have
\begin{align*}
&\int\limits_{N^2 + 1}^\infty \frac{1}{h^{1+\beta}} \int\limits_{N^2}^h \frac{\left(v_N(h-\sigma)-v_N(h)-v_N(\sigma)\right)^2}{\sigma^{1+\beta}} \diff \sigma \diff h = \int\limits_{N^2 + 1}^\infty \frac{1}{h^{1+\beta}} \int\limits_{N^2}^h \frac{v_N(h-\sigma)^2}{\sigma^{1+\beta}} \diff \sigma \diff h \\
&\qquad \leq N^{4\beta - 4\varepsilon} \int\limits_{N^2 + 1}^\infty \frac{1}{h^{1+\beta}} \int\limits_{N^2}^h \frac{1}{\sigma^{1+\beta}} \diff \sigma \diff h
\lesssim \frac{1}{N^{4\varepsilon}}.
\end{align*}
\underline{H: $0 \leq \sigma \leq N, h \geq N+1, h-\sigma \geq N$, where $\beta \leq \frac{1}{2}$:} 
In the remaining case we have  $N \leq h- \sigma \leq \xi$ and thus
we can estimate
\[
\int\limits_0^N \frac{1}{\sigma^{1+\beta}} \int\limits_{N+\sigma}^\infty \frac{v_N'(\xi)^2 \sigma^2}{h^{1+\beta}} \diff h \diff \sigma \lesssim N^{2\beta - 2\varepsilon - 2} \int\limits_0^N \sigma^{1-\beta} {\int\limits_{N+\sigma}^\infty \frac{1}{h^{1+\beta}} \diff h \diff \sigma}
\lesssim \frac{1}{N^{2\varepsilon}}.\qedhere
\]
\end{proof}

\subsection{Failure of \texorpdfstring{$CD(0,N)$}{CD(0,N)} on Arbitrary Balls}

In this subsection we finish the proof of Theorem \ref{thm:main}.

\begin{prop} \label{prop:locallyfalse}
 Let $0 < \beta < 2$, $d \in \N$. Let $\mu > 0$. Then there exists some $\rho > 0$ and a smooth and compactly supported function $u \in \mathcal{C}^\infty_c(\R^d)$ such that for all $x \in \R^d$ with $\lvert x \rvert \leq \rho$ one has
 \[
  0 < \Gamma_2(u)(x) < \mu (\mathcal{L}(u)(x))^2.
 \]
\end{prop}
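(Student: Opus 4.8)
The plan is a soft continuity argument built on top of Theorem~\ref{thm:cc}. Fix $\mu>0$ and let $u\in\mathcal{C}_c^\infty(\R^d)$ be the function produced by Theorem~\ref{thm:cc}, so that $0<\Gamma_2(u)(0)<\mu\,(\mathcal{L}(u)(0))^2$. I claim that it suffices to show that the two maps $x\mapsto\mathcal{L}(u)(x)$ and $x\mapsto\Gamma_2(u)(x)$ are continuous on all of $\R^d$. Indeed, granting this, the function $x\mapsto\mu\,(\mathcal{L}(u)(x))^2-\Gamma_2(u)(x)$ is continuous and strictly positive at $x=0$, hence strictly positive on some ball $B_\rho(0)$; likewise $x\mapsto\Gamma_2(u)(x)$ is continuous and strictly positive at $x=0$, hence strictly positive on a (possibly smaller) ball. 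Shrinking $\rho$ so that both hold on $B_\rho(0)$ gives exactly the asserted inequality $0<\Gamma_2(u)(x)<\mu\,(\mathcal{L}(u)(x))^2$ for $\lvert x\rvert\le\rho$. (Note that $\Gamma_2(u)(0)>0$ together with $\Gamma_2(u)(0)<\mu\,(\mathcal{L}(u)(0))^2$ forces $\mathcal{L}(u)(0)\neq0$, so there is nothing degenerate here.) Thus the whole proof reduces to continuity under the integral sign for the representation formulas \eqref{eq:DefLaplace} and \eqref{eq:Gamma2}.

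For $\mathcal{L}(u)$ this is immediate: using \eqref{eq:DefLaplace}, for each fixed $h$ the integrand $\bigl(u(x+h)-2u(x)+u(x-h)\bigr)/\lvert h\rvert^{d+\beta}$ is continuous in $x$, and it is bounded, uniformly in $x\in\R^d$, by $\min\bigl\{\lVert D^2u\rVert_\infty\lvert h\rvert^{2-d-\beta},\,4\lVert u\rVert_\infty\lvert h\rvert^{-d-\beta}\bigr\}$, which lies in $L^1(\R^d)$ precisely because $0<\beta<2$. Dominated convergence then yields continuity of $\mathcal{L}(u)$.

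The only genuinely technical point is the corresponding statement for $\Gamma_2(u)$. Writing the mixed second difference $\Delta^2_{h,\sigma}u(x):=u(x+h+\sigma)-u(x+h)-u(x+\sigma)+u(x)$, the integrand in \eqref{eq:Gamma2} is $\bigl(\Delta^2_{h,\sigma}u(x)\bigr)^2\lvert h\rvert^{-d-\beta}\lvert\sigma\rvert^{-d-\beta}$, which for fixed $(h,\sigma)$ is continuous in $x$. To dominate it, uniformly in $x\in\R^d$, one distinguishes three regimes for $\lvert\Delta^2_{h,\sigma}u(x)\rvert$: it is $\le\lVert D^2u\rVert_\infty\lvert h\rvert\lvert\sigma\rvert$ when $\lvert h\rvert,\lvert\sigma\rvert\le1$; viewing $\Delta^2_{h,\sigma}u$ as a difference of two first-order increments, it is $\le 2\lVert Du\rVert_\infty\lvert h\rvert$ when $\lvert h\rvert\le1<\lvert\sigma\rvert$ (and symmetrically with $h,\sigma$ exchanged); and it is $\le 4\lVert u\rVert_\infty$ when $\lvert h\rvert,\lvert\sigma\rvert>1$. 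Squaring and dividing by $\lvert h\rvert^{d+\beta}\lvert\sigma\rvert^{d+\beta}$ therefore bounds the integrand by the piecewise-defined function equal to $\lVert D^2u\rVert_\infty^2\lvert h\rvert^{2-d-\beta}\lvert\sigma\rvert^{2-d-\beta}$ on $\{\lvert h\rvert,\lvert\sigma\rvert\le1\}$, to $4\lVert Du\rVert_\infty^2\lvert h\rvert^{2-d-\beta}\lvert\sigma\rvert^{-d-\beta}$ on $\{\lvert h\rvert\le1<\lvert\sigma\rvert\}$ (and its mirror image), and to $16\lVert u\rVert_\infty^2\lvert h\rvert^{-d-\beta}\lvert\sigma\rvert^{-d-\beta}$ on $\{\lvert h\rvert,\lvert\sigma\rvert>1\}$; each piece is integrable over the corresponding region exactly because $\beta<2$, so the dominating function is in $L^1(\R^d\times\R^d)$ and is independent of $x$. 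A final application of dominated convergence shows that $\Gamma_2(u)$ is continuous, which completes the argument. The main obstacle is this three-region domination of the $\Gamma_2$-integrand; the rest of the proof is entirely routine.
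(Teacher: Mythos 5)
Your proof is correct, and its skeleton is the same as the paper's: take the function $u$ from Theorem~\ref{thm:cc}, prove continuity of $x\mapsto\mathcal{L}(u)(x)$ and $x\mapsto\Gamma_2(u)(x)$ by dominated convergence, and conclude on a small ball around the origin. The difference lies in how the domination is achieved. The paper exploits the extra feature of Theorem~\ref{thm:cc} that $u$ vanishes on $B_{1/4}(0)$: for $\lvert x\rvert\le\tfrac18$ the singular regions (small $\lvert h\rvert$, respectively $(h,\sigma)\in B_{1/16}\times B_{1/16}$) contribute nothing, so only $\lVert u\rVert_\infty$ and $\lVert\nabla u\rVert_\infty$ enter the dominating functions, and continuity is only established at $x=0$; it also works with a quantitative margin, applying Theorem~\ref{thm:cc} with $\mu/4$ and choosing $\rho$ so that $\Gamma_2(u)(x)\le 2\Gamma_2(u)(0)$ and $(\mathcal{L}(u)(x))^2\ge\tfrac12(\mathcal{L}(u)(0))^2$. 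You instead dominate the singular regions by the standard second-order bounds $\lvert u(x+h)-2u(x)+u(x-h)\rvert\le\lVert D^2u\rVert_\infty\lvert h\rvert^2$ and $\lvert\Delta^2_{h,\sigma}u(x)\rvert\le\lVert D^2u\rVert_\infty\lvert h\rvert\lvert\sigma\rvert$ (plus the first-order bound in the mixed regime), which is valid for any $u\in\mathcal{C}_c^\infty(\R^d)$, requires no vanishing near the origin, and yields continuity of $\mathcal{L}(u)$ and $\Gamma_2(u)$ on all of $\R^d$; your soft argument via positivity of the continuous function $x\mapsto\mu(\mathcal{L}(u)(x))^2-\Gamma_2(u)(x)$ (together with positivity of $\Gamma_2(u)$ near $0$, which you correctly secure from $\Gamma_2(u)(0)>0$) then replaces the paper's explicit $\mu/4$ bookkeeping. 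Both routes are sound; yours is slightly more general and self-contained, the paper's avoids second-derivative bounds by re-using the structure of its counterexample.
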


\begin{proof}
 Let $\mu > 0$ and $u \in \mathcal{C}^\infty_c(\R^{ {d}})$ { {be}} as in Theorem \ref{thm:cc} satisfying $0 < \Gamma_2(u)(0) < \frac{\mu}{4} \left( \mathcal{L}(u)(0)\right)^2$ and $u(y) = 0$ for all $|y| \leq \frac{1}{4}$. We will first show continuity of $\mathcal{L}(u)$ and $\Gamma_2(u)$ at zero, i.e.
\begin{equation}
 \label{eq:continuity}
 \mathcal{L}(u)(x) \to \mathcal{L}(u)(0) \quad \text{ and } \quad \Gamma_2(u)(x) \to \Gamma_2(u)(0) \quad \text{ as } x \to 0.
 \end{equation}
 Since $u$ is continuous we can show \eqref{eq:continuity} by finding integrable functions dominating the integrands and applying Lebesgue's theorem, { {resembling the proof of Theorem \ref{thm:cc}}}. For the first part we observe
\[
 \frac{|(u(x+h)-2u(x) + u(x-h)|}{|h|^{{ d}+\beta}} \leq 4|u|_\infty \frac{1-\mathds{1}_{ {  B_{ \frac{1}{8}}(0)}}(h)}{|h|^{{ d}+\beta}}, \quad \lvert x \rvert\leq \frac{1}{8},
\]
and the latter is integrable on $\R^{ d}$, which shows $\mathcal{L}(u)(x) \to \mathcal{L}(u)(0)$ as $x \to 0$. The continuity of $\Gamma_2(u)$ at zero requires a more detailed analysis{ {, as already indicated in the proof of Theorem \ref{thm:cc}}}. We define 
\[
 g(x,h,\sigma) := \frac{[u(x+h+\sigma) - u(x+h) - u(x+\sigma) + u(x)]^2}{|h|^{{ d}+\beta}|\sigma|^{{ d}+\beta}},
\]
and $B := B_{\frac{1}{16}}(0) { \subset \R^d}$, then 
\begin{equation} \label{eq:Gamma_2splitted}
 c_{\beta,{ d}}^{-2}\Gamma_2(u)(x)
  = \int_B \int_B  g(x,h,\sigma) \diff \sigma \diff h 
   + \int_{\R^{ d}\setminus B}\int_{\R^{ d}\setminus B} g(x,h,\sigma) \diff \sigma \diff h
   + 2 \int_{\R^{ d}\setminus B}\int_{B} g(x,h,\sigma) \diff \sigma \diff h.
\end{equation}
Let $\lvert x \rvert \leq \frac{1}{8}$, then the first integral of 
\eqref{eq:Gamma_2splitted} vanishes. In the second integral of \eqref{eq:Gamma_2splitted} we estimate $g$ by the integrable function $16 |u|_\infty ^2 |h|^{-{ d}-\beta}|\sigma|^{-{ d}-\beta}$. In the third part of \eqref{eq:Gamma_2splitted} we apply the mean value theorem to obtain the upper bound
 $g(x,h,\sigma) \leq 4 |{ {\nabla u}}|^2_\infty \sigma^{{ -d+2}-\beta} h^{-{ d}-\beta}$, which is integrable on $(\R^{ d} \setminus B) \times B$. Whence we can apply Lebesgue's theorem to obtain \eqref{eq:continuity}.
\\
Having established \eqref{eq:continuity} we can show the claim by choosing $\rho > 0$ small enough such that 
\[|\Gamma_2u(x) - \Gamma_2(u)(0)| \leq \Gamma_2(u)(0)\quad \text{ and } \quad |\left(\mathcal{L}(u)(x)\right)^2-\left(\mathcal{L}(u)(0)\right)^2| \leq \frac{\left(\mathcal{L}(u)(0)\right)^2}{2}
 \quad \text{ for all } \lvert x \rvert\leq \rho.
 \]
 Then $\left(\mathcal{L}(u)(x)\right)^2 \geq \left(\mathcal{L}(u)(0)\right)^2 - |\left(\mathcal{L}(u)(0)\right)^2 - \left(\mathcal{L}(u)(x)\right)^2| \geq 
\frac{\left(\mathcal{L}(u)(0)\right)^2}{2}$, whence
\[
 \Gamma_2(u)(x) \leq 2 \Gamma_2(u)(0) < \frac{\mu}{2}\left(\mathcal{L}(u)(0)\right)^2 \leq \mu \left(\mathcal{L}(u)(x)\right)^2, \qquad \lvert x \rvert\leq \rho. \qedhere
\]
\end{proof}

Applying a scaling argument we can give a counterexample on arbitrary balls, which is our main result.

\begin{mythm} \label{thm:FinalBall}
 Let $0 < \beta < 2$, $d \in \N$ and $R > 0$. Then \eqref{eq:BE} with $\kappa =0$ fails on all of $\overline{B_R(0)} \subset \R^d$, i.e. for all $\mu > 0$ there exists some smooth and compactly supported function $u \in \mathcal{C}^\infty_c(\R^d)$ such that for all $x \in \R^d$ with $\lvert x \rvert \leq R$ one has
 \[
  0 < \Gamma_2(u)(x) < \mu (\mathcal{L}(u)(x))^2.
 \]
\end{mythm}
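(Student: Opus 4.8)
The plan is to reduce Theorem~\ref{thm:FinalBall} to Proposition~\ref{prop:locallyfalse} by a scaling (homogeneity) argument, exploiting that both the fractional Laplacian and its $\Gamma_2$ operator behave well under dilations. First I would fix $\mu>0$ and invoke Proposition~\ref{prop:locallyfalse} to produce a radius $\rho>0$ together with a function $u\in\mathcal{C}^\infty_c(\R^d)$ satisfying $0<\Gamma_2(u)(y)<\mu\,(\mathcal{L}(u)(y))^2$ for all $y$ with $|y|\le\rho$. If $\rho\ge R$ there is nothing left to prove, so one may assume $\rho<R$, set $\lambda:=\rho/R\in(0,1)$, and define $v(x):=u(\lambda x)$, which again belongs to $\mathcal{C}^\infty_c(\R^d)$.

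The central step is to establish the pointwise scaling identities
\[
\mathcal{L}(v)(x)=\lambda^{\beta}\,(\mathcal{L}u)(\lambda x),\qquad \Gamma_2(v)(x)=\lambda^{2\beta}\,(\Gamma_2 u)(\lambda x)\qquad\text{for every }x\in\R^d.
\]
These follow from the integral representations \eqref{eq:DefLaplace} and \eqref{eq:Gamma2} via the changes of variables $h\mapsto\lambda^{-1}h$ (and $\sigma\mapsto\lambda^{-1}\sigma$ for $\Gamma_2$), in exactly the same manner as the computation already carried out in the proof of Proposition~\ref{prop:NoCurvature}; the only new point is that the substitution is now performed at an arbitrary base point $x$ rather than at $x=0$, which changes nothing because the relevant kernels are translation invariant.

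Granting these identities, for $|x|\le R$ one has $|\lambda x|\le\lambda R=\rho$, so the inequality of Proposition~\ref{prop:locallyfalse} is available at the point $\lambda x$; multiplying it by the positive factor $\lambda^{2\beta}$ and inserting the scaling identities yields
\[
0<\Gamma_2(v)(x)=\lambda^{2\beta}(\Gamma_2 u)(\lambda x)<\lambda^{2\beta}\mu\,(\mathcal{L}(u)(\lambda x))^2=\mu\,(\mathcal{L}(v)(x))^2
\]
for all $x$ with $|x|\le R$, which is precisely the assertion (and, combined with Proposition~\ref{prop:NoCurvature}, this gives Theorem~\ref{thm:main} for general $\kappa$, since after translating to $x=0$ one may assume $\kappa=0$).

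I do not expect a genuine obstacle: all of the real analytic difficulty has already been absorbed into Theorem~\ref{thm:cc} and Proposition~\ref{prop:locallyfalse}. The only minor care needed is to record the scaling identities at points $x\neq 0$ (the proof of Proposition~\ref{prop:NoCurvature} states them only at the origin) and to dispose of the trivial case $\rho\ge R$ separately.
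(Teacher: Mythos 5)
Your proposal is correct and follows essentially the same route as the paper: apply Proposition \ref{prop:locallyfalse}, rescale via $v(x)=u(\lambda x)$ with $\lambda=\rho/R$, and use the pointwise scaling identities $\mathcal{L}(v)(x)=\lambda^{\beta}\mathcal{L}(u)(\lambda x)$, $\Gamma_2(v)(x)=\lambda^{2\beta}\Gamma_2(u)(\lambda x)$, which the paper likewise obtains by the same change of variables as in Proposition \ref{prop:NoCurvature}. The only cosmetic difference is that the paper writes the dilation as $v(x)=u(x/M)$ with $M=R/\rho$ and does not single out the trivial case $\rho\ge R$ (which the uniform scaling argument covers anyway).
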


\begin{proof}
Let $R > 0$ and $\mu$ be given, and fix $\rho > 0$ and $u$ as in Proposition \ref{prop:locallyfalse}. Define $M := \frac{R}{\rho}$ and $v(x) := u\left( \frac{x}{M} \right)$. Then $v \in \mathcal{C}^\infty_c(\R^d)$ satisfies 
$\mathcal{L}(v)(x) = M^{-\beta} \mathcal{L}(u)\left( \frac{x}{M} \right)$ and $\Gamma_2(v)(x) = M^{-2\beta}\Gamma_2u\left( \frac{x}{M} \right)$, as a short calculation similar to the proof of Proposition \ref{prop:NoCurvature} shows. Thus we conclude
$ 0 < \Gamma_2(v)(x) < \mu \left(\mathcal{L}(v)(x)\right)^2$ for all $\left|\frac{x}{M}\right| \leq \rho$, i.e. $\lvert x \rvert \leq R$.
\end{proof}

\bibliographystyle{alpha}

\end{document}